\documentclass[12pt, reqno]{amsart}


\usepackage[in]{fullpage}
\usepackage{parskip}
\usepackage{colonequals}
\usepackage[all,cmtip]{xy}
\usepackage{bm}
\usepackage{amssymb}
\usepackage[dvipsnames]{xcolor}
\definecolor{citecol}{RGB}{0,100,255}
\definecolor{linkcol}{RGB}{59,160,47}
\usepackage[colorlinks=true, citecolor=citecol, linkcolor=linkcol]{hyperref}

\setlength\parindent{14pt}


\DeclareMathOperator{\ed}{d}
\DeclareMathOperator{\dive}{div}
\DeclareMathOperator{\sign}{sign}

\newcommand{\R}{\mathbb{R}}
\newcommand{\M}{\mathcal{M}}
\newcommand{\Uu}{\mathcal{U}}
\newcommand{\N}{\mathcal{N}}
\newcommand{\Su}{\mathcal{S}}
\newcommand{\B}{\mathcal{B}}

\newcommand{\bs}{\boldsymbol}
\newcommand{\Lie}{\mathcal{L}}

\newcommand{\vertiii}[1]{{\left\vert\kern-0.25ex\left\vert\kern-0.25ex\left\vert #1 
    \right\vert\kern-0.25ex\right\vert\kern-0.25ex\right\vert}}
    
\newcommand{\restrictTo}[1]{\big|_{#1}}


\theoremstyle{plain}
\newtheorem{theorem}{Theorem}
\newtheorem*{theorem*}{Theorem}
\newtheorem{lemma}{Lemma}
\newtheorem*{lemma*}{Lemma}
\newtheorem{proposition}{Proposition}
\newtheorem*{proposition*}{Proposition}
\newtheorem{remark}{Remark}
\newtheorem*{remark*}{Remark}
\newtheorem{definition}{Definition}
\newtheorem*{definition*}{Definition}
\newtheorem{corollary}{Corollary}
\newtheorem*{corollary*}{Corollary}


\author{Yuri Bogdanskii}
\address{Igor Sikorsky Kyiv Polytechnic Institute, 37 Peremohy Avenue, 03056 Kyiv, Ukraine}
\email{y.bogdanskyi@kpi.ua}

\author{Vladyslav Shram}
\address{Igor Sikorsky Kyiv Polytechnic Institute, 37 Peremohy Avenue, 03056 Kyiv, Ukraine}
\email{shram.vladyslav@gmail.com}

\subjclass[2010]{Primary 58B99; Secondary 58C35}
\keywords{Banach manifold, Radon measure, multivector field, divergence, surface measure}


\begin{document}

\title{Divergence of multivector fields on infinite-dimensional manifolds}

\begin{abstract}
This article studies divergence of multivector fields on Banach manifolds with a Radon measure. The proposed definition is consistent with the classical divergence from finite-dimensional differential geometry. Certain natural properties of divergence are transferred to the case of infinite dimension.
\end{abstract}


\maketitle

\vspace{-30pt}

\quad\\


\section{Classical divergence}

Let $\M$ be an orientable differentiable real $n$-dimensional manifold of class $C^2$. A choice of a volume form $\Omega$ on $\M$ gives rise to a divergence operator, which is defined as follows. For a vector field $\bs{X}$ (of class $C^1$), $\dive{\bs{X}}$ is a function on $\M$ such that
\begin{equation*}
\dive{\bs{X}} \cdot \Omega = \ed i_{\bs{X}} \Omega,
\end{equation*}
where $i_{\bs{X}}$ denotes the interior product of a differential form by a vector field $\bs{X}$ (Namely, $i_{\bs{X}} \omega(\bs{Z_1}, \dots, \bs{Z_{k - 1}}) = \omega(\bs{X}, \bs{Z_1}, \dots, \bs{Z_{k - 1}})$).

For a decomposable $m$-vector field $\bs{\vec{X}} = \bs{X_1} \wedge \dots \wedge \bs{X_m}$ and a differential $k$-form $\omega$, the interior product $i_{\bs{\vec{X}}} \omega = i(\bs{\vec{X}}) \omega$ of $\omega$ by $\bs{\vec{X}}$ is given by
\begin{equation}
\label{interior_product}
i_{\bs{\vec{X}}} \omega \colonequals i_{\bs{X_m}} \dots i_{\bs{X_1}} \omega, \text{ if } m \leq k,
\end{equation} 
and
\begin{equation*}
i_{\bs{\vec{X}}} \omega \colonequals 0, \text{ if } m > k.
\end{equation*}

Throughout this paper, by an $m$-vector field of class $C^p$ we mean a \textbf{linear combination of decomposable $m$-vector fields} whose components are vector fields of class $C^p$. That said, one may notice that some of the definitions and results in the article can also be transferred to multivector fields understood in a broader sense.

In an obvious way the above definition of $i_{\bs{\vec{X}}}$ extends to an arbitrary multivector field $\bs{\vec{X}}$.

This operation satisfies the following property: for any $k$-vector field $\bs{\vec{X}}$, $m$-vector field $\bs{\vec{Z}}$ and a differential $(k + m)$-form $\omega$, one has the equality
\begin{equation*}
\langle i_{\bs{\vec{X}}} \omega, \bs{\vec{Z}} \rangle = \langle \omega, \bs{\vec{X}} \wedge \bs{\vec{Z}} \rangle,
\end{equation*}
where $\langle \cdot, \cdot \rangle$ denotes the natural pairing between differential forms and multivector fields of the same degree.

Then the divergence $\dive{\bs{\vec{X}}}$ of a $k$-vector field $\bs{\vec{X}}$ is defined by the following formula (see, for example, \cite{BPH11} for an equivalent definition in terms of the Hodge operator)
\begin{equation}
\label{div_classical}
i_{\dive{\bs{\vec{X}}}} \Omega = (-1)^{k - 1} \ed i_{\bs{\vec{X}}} \Omega.
\end{equation}

\begin{remark}
In principle, we could define the interior product by a multivector field in a different way, namely $i'_{\bs{X_1} \wedge \dots \wedge \bs{X_m}} = i_{\bs{X_1}} \circ \dots \circ i_{\bs{X_m}}$. In this case, equation (\ref{div_classical}) from the definition of divergence becomes $i'_{\dive{\bs{\vec{X}}}} \Omega = \ed i'_{\bs{\vec{X}}} \Omega$. However, in this article we always use the definition of interior product $i_{\bs{\vec{X}}}$ given by (\ref{interior_product}).
\end{remark}

Existence of $\dive{\bs{\vec{X}}}$ for a multivector field $\bs{\vec{X}}$ will follow from Proposition \ref{proposition1}, and uniqueness follows from general facts of multilinear algebra (see, for example, \cite[chap.~III]{Bou70}).

Let $\M$ be a manifold of class $C^3$. Given a $(k + 1)$-vector field $\bs{\vec{X}}$ of class $C^2$ and a differential $k$-form $\omega$ of class $C_0^2$ (that is, $\omega \in C^2(\M)$ and is compactly supported) on $\M$, Stokes' theorem implies $\int\limits_\M \ed(\omega \wedge i_{\bs{\vec{X}}} \Omega) = 0$, which can be written as
\begin{equation}
\label{from_Stokes}
\int\limits_\M \ed{\omega} \wedge i_{\bs{\vec{X}}}\Omega = (-1)^{k + 1} \int\limits_\M \omega \wedge \ed i_{\bs{\vec{X}}} \Omega.
\end{equation}

\begin{lemma}
\label{lemma1}
Let $\omega$ and $\bs{\vec{X}}$ be a differential $k$-form and a $k$-vector field on $\M$, respectively. Then the following equality holds
\begin{equation}
\label{furmula_lemma1}
\omega \wedge i_{\bs{\vec{X}}} \Omega = \langle \omega, \bs{\vec{X}} \rangle \Omega.
\end{equation}
\end{lemma}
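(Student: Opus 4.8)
The plan is to reduce the identity to a pointwise statement in a single tangent space and then verify it on basis elements, where the entire content reduces to a sign cancellation. Both sides of \eqref{furmula_lemma1} are pointwise and $C^\infty(\M)$-linear in $\omega$ and in $\bs{\vec{X}}$ — the interior and wedge products are fibrewise algebraic operations, and the pairing $\langle\cdot,\cdot\rangle$ is fibrewise bilinear — so it suffices to fix a point $p\in\M$ and prove the equality of $n$-covectors in $V := T_p\M$. Since $\dim\Lambda^n V^* = 1$ and $\Omega_p\neq 0$, I can choose a basis $e_1,\dots,e_n$ of $V$, with dual basis $e^1,\dots,e^n$, for which $\Omega = e^1\wedge\dots\wedge e^n$ at $p$.

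Both sides are moreover linear in $\omega$ and in $\bs{\vec{X}}$ (recall that a $k$-vector field is by definition a linear combination of decomposable ones and that $i_{\bs{\vec{X}}}$ extends linearly). Hence it is enough to treat $\omega = e^I := e^{i_1}\wedge\dots\wedge e^{i_k}$ and $\bs{\vec{X}} = e_J := e_{j_1}\wedge\dots\wedge e_{j_k}$ for strictly increasing multi-indices $I = (i_1 < \dots < i_k)$ and $J = (j_1 < \dots < j_k)$. For these, $\langle e^I, e_J\rangle = \delta_{IJ}$, so the claim becomes $e^I\wedge i_{e_J}\Omega = \delta_{IJ}\,\Omega$.

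The key computation is the explicit evaluation of the contraction. Writing $J^c$ for the complementary increasing multi-index and $\varepsilon_J$ for the sign of the permutation carrying $(1,\dots,n)$ to $(j_1,\dots,j_k,J^c)$, I will show, by unwinding the definition $i_{e_J}\Omega = i_{e_{j_k}}\cdots i_{e_{j_1}}\Omega$, that $i_{e_J}\Omega = \varepsilon_J\, e^{J^c}$. Then $e^I\wedge e^{J^c}$ vanishes unless $I$ and $J^c$ are disjoint, which for multi-indices of sizes $k$ and $n-k$ forces $I = J$; and in that case $e^J\wedge e^{J^c} = \varepsilon_J\,\Omega$. Consequently $e^I\wedge i_{e_J}\Omega = \varepsilon_J^2\,\delta_{IJ}\,\Omega = \delta_{IJ}\,\Omega$, which is the required equality.

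I expect the main obstacle to be the sign bookkeeping in the contraction formula $i_{e_J}\Omega = \varepsilon_J\, e^{J^c}$: the interior product is applied in the order $i_{e_{j_1}},\dots,i_{e_{j_k}}$, and each step both deletes the corresponding factor from the wedge and contributes a sign from moving past the surviving factors, so the signs must be tracked carefully (an induction on $k$ starting from $i_{e_{j_1}}\Omega = (-1)^{j_1 - 1}\, e^1 \wedge \cdots \wedge \widehat{e^{j_1}} \wedge \cdots \wedge e^n$ organizes this conveniently). The pleasant point, and the reason the statement is clean, is that the sign $\varepsilon_J$ produced by the contraction is exactly the sign produced again when reassembling $e^J\wedge e^{J^c}$ into $\Omega$, so the two occurrences cancel via $\varepsilon_J^2 = 1$.
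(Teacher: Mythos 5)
Your proof is correct, but it takes a genuinely different route from the paper's. The paper argues coordinate-free: it repeatedly applies the antiderivation property of the interior product to the identically vanishing $(n+1)$-form $\omega \wedge i_{\bs{X_{k-1}}}\cdots i_{\bs{X_1}}\Omega$, transferring one $i_{\bs{X_j}}$ at a time from $\Omega$ onto $\omega$ at the cost of a sign $(-1)^{k-1}, (-1)^{k-2}, \dots$; the accumulated sign $(-1)^{k(k-1)/2}$ is then exactly absorbed by reversing the order of the $k$ interior products acting on the $k$-form $\omega$, yielding $\langle\omega,\bs{\vec{X}}\rangle\,\Omega$ directly. You instead localize at a point, normalize $\Omega = e^1\wedge\dots\wedge e^n$, reduce by multilinearity to basis elements $e^I$, $e_J$, and verify $e^I\wedge i_{e_J}\Omega = \delta_{IJ}\Omega$ via the contraction formula $i_{e_J}\Omega = \varepsilon_J\,e^{J^c}$ and the cancellation $\varepsilon_J^2=1$. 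Both arguments are complete and the sign bookkeeping is of comparable weight (your $\varepsilon_J$ induction versus the paper's $(-1)^{k(k-1)/2}$ plus the order-reversal of interior products); your version makes the duality $\langle e^I, e_J\rangle = \delta_{IJ}$ concretely visible, while the paper's antiderivation argument is basis-free and is the template reused for the more general identity (\ref{auxiliary_formula}) involving $j(\omega)$. Note that your reduction to a pointwise basis computation is legitimate only because this lemma lives in Section 1, where $\M$ is finite-dimensional; it would not transfer to the Banach setting of the later sections, whereas the paper's algebraic manipulation would.
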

\begin{proof}
Without loss of generality we may assume that $\bs{\vec{X}}$ is decomposable: $\bs{\vec{X}} = \bs{X_1} \wedge \dots \wedge \bs{X_k}$.

We have
\begin{equation*}
\begin{split}
& \omega \wedge i_{\bs{\vec{X}}} \Omega = \omega \wedge (i_{\bs{X_k}} \dots i_{\bs{X_1}} \Omega) = (-1)^{k - 1} (i_{\bs{X_k}} \omega) \wedge (i_{\bs{X_{k - 1}}} \dots i_{\bs{X_1}} \Omega) = \dots \\
& = (-1)^{\frac{(k - 1)k}{2}} (i_{\bs{X_1}} \dots i_{\bs{X_k}} \omega) \wedge \Omega = (i_{\bs{X_k}} \dots i_{\bs{X_1}} \omega) \wedge \Omega = \langle \omega, \bs{\vec{X}} \rangle \Omega.
\end{split}
\end{equation*}
\end{proof}

Let $\mu$ be a measure on $\M$ induced by the volume form $\Omega$ (for $f \in C^1(\M)$, one has $\int\limits_\M f \,d\mu = \int\limits_\M f \Omega$). Given a differential $k$-form $\omega$ and $(k + 1)$-vector field $\bs{\vec{X}}$, using (\ref{from_Stokes}) and (\ref{furmula_lemma1}), we get
\begin{equation*}
\int\limits_\M \langle \ed{\omega}, \bs{\vec{X}} \rangle \,d\mu = \int\limits_\M \ed{\omega} \wedge i_{\bs{\vec{X}}} \Omega = (-1)^{k + 1} \int\limits_\M \omega \wedge \ed i_{\bs{\vec{X}}} \Omega = - \int\limits_\M \omega \wedge i_{\dive{\bs{\vec{X}}}} \Omega = - \int\limits_\M \langle \omega, \dive{\bs{\vec{X}}} \rangle \,d\mu.
\end{equation*}
Thus, (\ref{from_Stokes}) is equivalent to 
\begin{equation}
\label{div_measure_equiv}
\int\limits_\M \langle \ed{\omega}, \bs{\vec{X}} \rangle \,d\mu = - \int\limits_\M \langle \omega, \dive{\bs{\vec{X}}} \rangle \,d\mu.
\end{equation}

Using the measure $\mu$, one can now see the divergence of a $(k + 1)$-vector field $\bs{\vec{X}}$ on $\M$ as a $k$-vector field which satisfies (\ref{div_measure_equiv}) for any differential $k$-form of class $C_0^1$. For a manifold of class $C^3$, formula (\ref{div_measure_equiv}) leads to a definition of $\dive{\bs{\vec{X}}}$ which is equivalent to the original one.

\begin{proposition}
\label{proposition1}
Let $\bs{X}$ and $\bs{\vec{Z}}$ be a vector field and a $k$-vector field of class $C^1$ on $\M$, respectively. Then one has the following formula
\begin{equation}
\label{inductive_formula1}
\dive(\bs{X} \wedge \bs{\vec{Z}}) = \dive{\bs{X}} \cdot \bs{\vec{Z}} - \bs{X} \wedge \dive{\bs{\vec{Z}}} + \Lie_{\bs{X}}\bs{\vec{Z}}.
\end{equation}
where $\Lie_{\bs{X}}$ denotes Lie derivation along a field $\bs{X}$.
\end{proposition}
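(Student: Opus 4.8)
The plan is to verify the identity at the level of the defining relation (\ref{div_classical}) and then invoke uniqueness. Both sides of (\ref{inductive_formula1}) are $k$-vector fields, and the map $\bs{\vec{W}} \mapsto i_{\bs{\vec{W}}}\Omega$ is injective on $k$-vector fields (the multilinear-algebra fact already cited for uniqueness of the divergence); hence it suffices to check that the $(n-k+1)$-forms $i_{\bs{\vec{W}}}\Omega$ coincide when $\bs{\vec{W}}$ is the left-hand side and when it is the right-hand side of (\ref{inductive_formula1}). By linearity of every operation involved I may assume $\bs{\vec{Z}}$ is decomposable, $\bs{\vec{Z}} = \bs{Z_1} \wedge \dots \wedge \bs{Z_k}$, which makes $\Lie_{\bs{X}}\bs{\vec{Z}}$ accessible through the Leibniz rule.

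The computation starts from $i_{\dive(\bs{X} \wedge \bs{\vec{Z}})}\Omega = (-1)^k \ed\, i_{\bs{X} \wedge \bs{\vec{Z}}}\Omega$. Using (\ref{interior_product}) together with the graded anticommutativity of the exterior product, I rewrite $i_{\bs{X} \wedge \bs{\vec{Z}}}\Omega = (-1)^k\, i_{\bs{X}}\, i_{\bs{\vec{Z}}}\Omega$, which pulls the single-vector contraction $i_{\bs{X}}$ to the outside, and then apply Cartan's magic formula $\ed\, i_{\bs{X}} = \Lie_{\bs{X}} - i_{\bs{X}}\,\ed$ to the $(n-k)$-form $i_{\bs{\vec{Z}}}\Omega$. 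The two resulting terms are handled separately: in $i_{\bs{X}}\,\ed\, i_{\bs{\vec{Z}}}\Omega$ I substitute $\ed\, i_{\bs{\vec{Z}}}\Omega = (-1)^{k-1} i_{\dive\bs{\vec{Z}}}\Omega$, the definition of $\dive\bs{\vec{Z}}$, and reorder the contractions to produce $\bs{X} \wedge \dive\bs{\vec{Z}}$; in $\Lie_{\bs{X}}\, i_{\bs{\vec{Z}}}\Omega$ I use the derivation identity $\Lie_{\bs{X}}\, i_{\bs{\vec{Z}}} = i_{\Lie_{\bs{X}}\bs{\vec{Z}}} + i_{\bs{\vec{Z}}}\,\Lie_{\bs{X}}$ together with $\ed\Omega = 0$ and $\Lie_{\bs{X}}\Omega = \dive\bs{X}\cdot\Omega$ (the latter itself following from Cartan's formula and the definition of $\dive\bs{X}$ recalled at the start of the section). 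This produces exactly $i_{\dive\bs{X}\cdot\bs{\vec{Z}}}\Omega + i_{\Lie_{\bs{X}}\bs{\vec{Z}}}\Omega$. Collecting the three contributions and multiplying through by $(-1)^k$ yields $i_{\dive(\bs{X} \wedge \bs{\vec{Z}})}\Omega = i_{\dive\bs{X}\cdot\bs{\vec{Z}} - \bs{X}\wedge\dive\bs{\vec{Z}} + \Lie_{\bs{X}}\bs{\vec{Z}}}\Omega$, whence injectivity closes the argument.

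I expect the main obstacle to be the bookkeeping of signs rather than any conceptual difficulty: each transposition of interior products past one another and past $i_{\bs{X}}$ generates a power of $(-1)$ depending on $k$, and these must cancel precisely so that the signs $+,-,+$ in (\ref{inductive_formula1}) emerge. The two auxiliary identities I rely on, the graded Cartan formula for a single vector field and the commutation rule $\Lie_{\bs{X}}\, i_{\bs{\vec{Z}}} = i_{\Lie_{\bs{X}}\bs{\vec{Z}}} + i_{\bs{\vec{Z}}}\,\Lie_{\bs{X}}$, are classical, but the second must first be justified for a $k$-vector field; this is done by reducing to the decomposable case, where it follows from the one-vector identity $\Lie_{\bs{X}}\, i_{\bs{Z}}\alpha = i_{\Lie_{\bs{X}}\bs{Z}}\alpha + i_{\bs{Z}}\,\Lie_{\bs{X}}\alpha$ and the Leibniz rule for $\Lie_{\bs{X}}$ applied to $\bs{Z_1} \wedge \dots \wedge \bs{Z_k}$.
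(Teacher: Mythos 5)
Your proposal is correct and follows essentially the same route as the paper: reduce to decomposable $\bs{\vec{Z}}$, write $i_{\bs{X}\wedge\bs{\vec{Z}}}\Omega$ so that $i_{\bs{X}}$ acts last, apply Cartan's formula to $i_{\bs{\vec{Z}}}\Omega$, treat the $i_{\bs{X}}\ed$-term via the defining relation for $\dive\bs{\vec{Z}}$ and the Lie-derivative term via the derivation identity $\Lie_{\bs{X}} i_{\bs{\vec{Z}}} = i_{\Lie_{\bs{X}}\bs{\vec{Z}}} + i_{\bs{\vec{Z}}}\Lie_{\bs{X}}$ together with $\Lie_{\bs{X}}\Omega = \dive\bs{X}\cdot\Omega$, and conclude by injectivity of $\bs{\vec{W}}\mapsto i_{\bs{\vec{W}}}\Omega$. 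The sign bookkeeping you flag does work out exactly as you describe, matching the paper's computation line for line.
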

\begin{proof}
It suffices to prove formula (\ref{inductive_formula1}) only for a decomposable multivector field $\bs{\vec{Z}} = \bs{Z_1} \wedge \dots \wedge \bs{Z_k}$.

We have 
\begin{equation*}
(-1)^k \ed i_{\bs{X} \wedge \bs{\vec{Z}}} \Omega = \ed i_{\bs{\vec{Z}} \wedge \bs{X}} \Omega = \ed i_{\bs{X}} (i_{\bs{\vec{Z}}} \Omega) = - i_{\bs{X}} \ed (i_{\bs{\vec{Z}}} \Omega) + \Lie_{\bs{X}}(i_{\bs{\vec{Z}}} \Omega).
\end{equation*}

For the first term on the right-hand side we have
\begin{equation*}
- i_{\bs{X}} \ed (i_{\bs{\vec{Z}}} \Omega) = -(-1)^{k - 1} i_{\bs{X}} i_{\dive{\bs{\vec{Z}}}} \Omega = -(-1)^{k - 1} i_{\dive{\bs{\vec{Z}}} \wedge \bs{X}} \Omega = -i_{\bs{X} \wedge \dive{\bs{\vec{Z}}}} \Omega.
\end{equation*}

For the second term
\begin{equation*}
\begin{split}
& \Lie_{\bs{X}}(i_{\bs{\vec{Z}}} \Omega) = \Lie_{\bs{X}}(i_{\bs{Z_k}} \dots i_{\bs{Z_1}} \Omega) = i_{\bs{Z_k}} \Lie_{\bs{X}} (i_{\bs{Z_{k - 1}}} \dots i_{\bs{Z_1}} \Omega) + i_{\Lie_{\bs{X}} \bs{Z_k}} (i_{\bs{Z_{k - 1}}} \dots i_{\bs{Z_1}} \Omega) = \dots \\
& = i_{\bs{Z_k}} \dots i_{\bs{Z_1}} \Lie_{\bs{X}} \Omega + \sum\limits_{r = 1}^k i_{\bs{Z_k}} \dots i_{\Lie_{\bs{X}} \bs{Z_r}} \dots i_{\bs{Z_1}} \Omega = i_{\bs{\vec{Z}}} \ed i_{\bs{X}} \Omega + \sum\limits_{r = 1}^k i_{\bs{Z_1} \wedge \dots \wedge \Lie_{\bs{X}} \bs{Z_r} \wedge \dots \wedge \bs{Z_k}} \Omega \\
& = i_{\bs{\vec{Z}}} \dive{\bs{X}} \cdot \Omega + i_{\Lie_{\bs{X}} \bs{\vec{Z}}} \Omega = i_{\dive{\bs{X}} \cdot \bs{\vec{Z}}} \Omega + i_{\Lie_{\bs{X}} \bs{\vec{Z}}} \Omega = i_{\dive{\bs{X}} \cdot \bs{\vec{Z}} + \Lie_{\bs{X}} \bs{\vec{Z}}} \Omega.
\end{split}
\end{equation*}

Putting the two terms together we obtain the equality (\ref{inductive_formula1}).
\end{proof}

\begin{corollary}
Divergence of a $k$-vector field (of class $C^p$) exists and is a $(k - 1)$-vector field (of class $C^{p - 1}$).
\end{corollary}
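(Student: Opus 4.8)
The plan is to argue by induction on the degree $k$, using Proposition \ref{proposition1} as the inductive engine. Uniqueness is already guaranteed by the multilinear-algebra remarks preceding the proposition, so what remains is existence together with the regularity bookkeeping. For the base case $k = 1$, a $1$-vector field is an ordinary vector field $\bs{X}$ of class $C^p$; then $i_{\bs{X}}\Omega$ is a differential $(n-1)$-form of class $C^p$, so $\ed i_{\bs{X}}\Omega$ is an $n$-form of class $C^{p-1}$, and since $\Omega$ is nowhere vanishing the relation $\dive{\bs{X}}\cdot\Omega = \ed i_{\bs{X}}\Omega$ determines $\dive{\bs{X}}$ as a function (a $0$-vector field) of class $C^{p-1}$, as required.

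Before the inductive step proper I would reduce to decomposable fields. Because $\ed$ and the interior product are linear, the defining equation (\ref{div_classical}) shows that $\dive$ is linear in its argument; combined with uniqueness, it therefore suffices to verify the statement for a decomposable $(k+1)$-vector field $\bs{X}\wedge\bs{\vec{Z}}$, with $\bs{X}$ a vector field and $\bs{\vec{Z}}$ a decomposable $k$-vector field, both of class $C^p$. I would then invoke Proposition \ref{proposition1}: its proof does more than assert the identity (\ref{inductive_formula1}). Read with the inductive hypothesis in force — which supplies $\dive{\bs{\vec{Z}}}$, a $(k-1)$-vector field of class $C^{p-1}$, and $\dive{\bs{X}}$, a function of class $C^{p-1}$ — the computation exhibits an explicit $k$-vector field $\bs{\vec{W}} = \dive{\bs{X}}\cdot\bs{\vec{Z}} - \bs{X}\wedge\dive{\bs{\vec{Z}}} + \Lie_{\bs{X}}\bs{\vec{Z}}$ satisfying $i_{\bs{\vec{W}}}\Omega = (-1)^{k}\ed i_{\bs{X}\wedge\bs{\vec{Z}}}\Omega$, which is precisely (\ref{div_classical}) in degree $k+1$. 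This simultaneously establishes the existence of $\dive(\bs{X}\wedge\bs{\vec{Z}})$ and identifies it with $\bs{\vec{W}}$.

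The regularity claim is then read off term by term from $\bs{\vec{W}}$: the product $\dive{\bs{X}}\cdot\bs{\vec{Z}}$ multiplies a $C^{p-1}$ function by a $C^p$ field, the wedge $\bs{X}\wedge\dive{\bs{\vec{Z}}}$ pairs a $C^p$ field with a $C^{p-1}$ field, and the Lie derivative $\Lie_{\bs{X}}\bs{\vec{Z}}$ differentiates once; each term is accordingly a $k$-vector field of class $C^{p-1}$, and hence so is their sum. I expect the only genuinely delicate point to be this final regularity accounting: the Lie derivative term is what costs one order of smoothness and so pins down the $C^{p-1}$ conclusion, rather than something sharper. One must also confirm that the induction is well-founded — that is, that Proposition \ref{proposition1} is only ever applied with the divergences it references already known to exist at strictly lower degree (degree $k$ for $\bs{\vec{Z}}$ and degree $1$ for $\bs{X}$), which is exactly what the inductive hypothesis and the base case provide.
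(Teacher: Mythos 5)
Your proof is correct and takes essentially the same route as the paper: the paper's one-line proof ("immediately follows from formula (\ref{inductive_formula1})") is shorthand for exactly the induction on $k$ you spell out, with the base case $k=1$ read off from $\dive{\bs{X}}\cdot\Omega = \ed i_{\bs{X}}\Omega$ and the inductive step supplied by the explicit right-hand side of (\ref{inductive_formula1}), whose Lie-derivative term accounts for the loss of one order of smoothness.
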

\begin{proof}
The statement immediately follows from formula (\ref{inductive_formula1}).
\end{proof}

Given a differential $k$-form $\omega$ and a decomposable $m$-vector field $\bs{\vec{X}} = \bs{X_1} \wedge \dots \wedge \bs{X_m}$, one defines the \emph{interior product} $j_\omega \bs{\vec{X}} = j(\omega) \bs{\vec{X}}$ of $\bs{\vec{X}}$ by $\omega$ as follows
\begin{equation*}
j_\omega \bs{\vec{X}} \colonequals \frac{1}{k! (m - k)!} \sum\limits_{\sigma \in S_m} \sign(\sigma) \omega(\bs{X_{\sigma(1)}}, \dots, \bs{X_{\sigma(k)}}) \bs{X_{\sigma(k + 1)}} \wedge \dots \wedge \bs{X_{\sigma(m)}}, \text{ if } k \leq m,
\end{equation*} 
and
\begin{equation*}
j_\omega \bs{\vec{X}} \colonequals 0, \text{ if } k > m.
\end{equation*}

In an obvious way this definition extends to an arbitrary multivector field $\bs{\vec{X}}$. For a similar definition, see, for example, \cite{Mar97}.

Interior product of a multivector field by a differential form satisfies the following property: for any differential $k$-form $\omega$, differential $m$-form $\eta$ and $(k + m)$-vector field $\bs{\vec{X}}$, one has
\begin{equation*}
\langle \eta, j_\omega \bs{\vec{X}} \rangle = \langle \omega \wedge \eta, \bs{\vec{X}} \rangle.
\end{equation*}

One can prove the following generalisation of Lemma \ref{lemma1} (see \cite{BPH11}): for any differential $k$-form $\omega$ and an $m$-vector field $\bs{\vec{X}}$, the following relation holds
\begin{equation}
\label{auxiliary_formula}
i_{j(\omega) \bs{\vec{X}}} \Omega = (-1)^{k(m + 1)} \omega \wedge i_{\bs{\vec{X}}} \Omega.
\end{equation}

\begin{proposition}
Let $\omega$ and $\bs{\vec{X}}$ be a differential $k$-form and an $m$-vector field ($k < m$), respectively. Then the following Leibniz rule holds
\begin{equation*}
\dive (j(\omega) \bs{\vec{X}}) = (-1)^k j(\ed{\omega}) \bs{\vec{X}} + (-1)^k j(\omega) \dive{\bs{\vec{X}}}.
\end{equation*}
\end{proposition}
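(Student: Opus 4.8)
The plan is to prove the identity by applying the operator $i_{\bullet}\Omega$ to both sides and invoking injectivity of the map $\bs{\vec{Y}} \mapsto i_{\bs{\vec{Y}}} \Omega$ on multivector fields of a fixed degree (the same multilinear-algebra fact that underlies the uniqueness of divergence). Both sides are $(m - k - 1)$-vector fields: $j(\omega) \bs{\vec{X}}$ is an $(m - k)$-vector field and $\dive$ lowers the degree by one, while $j(\ed{\omega}) \bs{\vec{X}}$ and $j(\omega) \dive{\bs{\vec{X}}}$ are of degree $m - k - 1$ as well. Since $j$, $\dive$, and all the formulas involved are linear in $\bs{\vec{X}}$, there is no need to assume $\bs{\vec{X}}$ decomposable.

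First I would rewrite the left-hand side. Applying the defining relation (\ref{div_classical}) to the $(m - k)$-vector field $j(\omega) \bs{\vec{X}}$ gives $i_{\dive(j(\omega) \bs{\vec{X}})} \Omega = (-1)^{m - k - 1} \ed i_{j(\omega) \bs{\vec{X}}} \Omega$, and then (\ref{auxiliary_formula}) converts $i_{j(\omega) \bs{\vec{X}}} \Omega$ into a multiple of $\omega \wedge i_{\bs{\vec{X}}} \Omega$. Expanding with the graded Leibniz rule $\ed(\omega \wedge i_{\bs{\vec{X}}} \Omega) = \ed{\omega} \wedge i_{\bs{\vec{X}}} \Omega + (-1)^k \omega \wedge \ed i_{\bs{\vec{X}}} \Omega$ splits the computation into the two terms that should match the two summands on the right-hand side.

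For the first term I would run (\ref{auxiliary_formula}) backwards with the $(k + 1)$-form $\ed{\omega}$ in place of $\omega$, turning $\ed{\omega} \wedge i_{\bs{\vec{X}}} \Omega$ into a multiple of $i_{j(\ed{\omega}) \bs{\vec{X}}} \Omega$. For the second term I would first use (\ref{div_classical}) for the $m$-vector field $\bs{\vec{X}}$ to replace $\ed i_{\bs{\vec{X}}} \Omega$ by $(-1)^{m - 1} i_{\dive{\bs{\vec{X}}}} \Omega$, and then apply (\ref{auxiliary_formula}) once more, now to the $(m - 1)$-vector field $\dive{\bs{\vec{X}}}$, to arrive at $i_{j(\omega) \dive{\bs{\vec{X}}}} \Omega$. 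Collecting the factors and invoking injectivity of $i_{\bullet} \Omega$ then yields the claim.

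The main obstacle is purely the sign bookkeeping: each of the two terms accumulates several powers of $-1$ — from (\ref{div_classical}), from the Leibniz rule, from (\ref{div_classical}) applied to $\bs{\vec{X}}$, and from each use of (\ref{auxiliary_formula}), whose exponent $(-1)^{k(m + 1)}$ depends on the degrees involved. The verification amounts to checking that both accumulated exponents are congruent to $k$ modulo $2$, which reduces the identity exactly to the stated coefficients $(-1)^k$.
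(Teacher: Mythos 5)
Your proposal is correct and follows essentially the same route as the paper's own proof: apply (\ref{div_classical}) to the $(m-k)$-vector field $j(\omega)\bs{\vec{X}}$, convert via (\ref{auxiliary_formula}) to $\omega \wedge i_{\bs{\vec{X}}}\Omega$, split with the graded Leibniz rule, and fold each term back using (\ref{div_classical}) and (\ref{auxiliary_formula}). The sign bookkeeping you defer does check out --- both accumulated exponents reduce to $k$ modulo $2$ --- so nothing is missing.
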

\begin{proof}
Using (\ref{auxiliary_formula}), we have
\begin{equation*}
\begin{split}
 (-1)^{m - k - 1} \ed i_{j(\omega) \bs{\vec{X}}} \Omega & = (-1)^{m - k - 1 + k(m + 1)} \ed{\omega} \wedge i_{\bs{\vec{X}}} \Omega + (-1)^{m - k - 1 + k(m + 1) + k} \omega \wedge \ed i_{\bs{\vec{X}}} \Omega \\
& = (-1)^{km + m - 1} \ed{\omega} \wedge i_{\bs{\vec{X}}} \Omega + (-1)^{km + k} \omega \wedge \ed i_{\dive{\bs{\vec{X}}}} \Omega \\
& = (-1)^{km + m - 1 + (k + 1)(m + 1)} i_{j(\ed{\omega}) \bs{\vec{X}}} \Omega + (-1)^{km + k + km} i_{j(\omega) \dive{\bs{\vec{X}}}} \Omega \\
& = (-1)^k i_{j(\ed{\omega}) \bs{\vec{X}}} \Omega + (-1)^k i_{j(\omega) \dive{\bs{\vec{X}}}} \Omega.
\end{split}
\end{equation*}
\end{proof}


\section{Associated measures on Banach manifolds (see \cite{Bog12, BM17})}
\label{associated_measures}

Let $\M$ be a connected Hausdorff real Banach manifold of class $C^2$ with a model space $E$. By a differential $k$-form on $\M$ of class $C^n$ we mean a $C^n$-section of the bundle $L_{\mathrm{alt}}^k(T\M) \to \M$, where $L_{\mathrm{alt}}^k(T\M)$ is obtained by bundling together the spaces $L_{\mathrm{alt}}^k(T_p\M)$ of all bounded alternating $k$-linear forms on $T_p\M$, so that the space $L_{\mathrm{alt}}^k(T_p\M)$ is the fibre at $p \in \M$ of this bundle.

We say that an atlas $\Omega = \{(U_\alpha, \varphi_\alpha)\}$ on $\M$ is \emph{bounded} if there exists a real number $K > 0$ such that for any pair of charts $(U_\alpha, \varphi_\alpha)$ and $(U_\beta, \varphi_\beta)$, the transition map $F_{\beta \alpha} = \varphi_\beta \circ \varphi_\alpha^{-1}$ satisfies the condition
\begin{equation*}
(x \in \varphi_\alpha(U_\alpha \cap U_\beta)) \implies (\|F_{\beta \alpha}'(x)\| \leq K, \|F_{\beta \alpha}''(x)\| \leq K).
\end{equation*}

We then say that two bounded atlases $\Omega_1$ and $\Omega_2$ are \emph{equivalent} if $\Omega_1 \cup \Omega_2$ is again a bounded atlas. A \emph{bounded structure} (of class $C^2$) on $\M$ is defined as an equivalence class of bounded atlases on $\M$.

Let $(\M_1, \Omega_1)$ and $(\M_2, \Omega_2)$ be Banach manifolds $\M_1$ and $\M_2$ of class $C^2$ modeled on $E_1$ and $E_2$ together with bounded atlases $\Omega_1$ and $\Omega_2$, respectively. We say that a map $f \colon M_1 \to \M_2$ is a \emph{bounded morphism} if there exists a real number $C > 0$ such that for any pair of charts $(U, \varphi) \in \Omega_1$ and $(V, \psi) \in \Omega_2$, the following condition is satisfied
\begin{equation*}
(p \in U, \,f(p) \in V) \implies \left(\|(\psi \circ f \circ\varphi^{-1})^{(k)}(\varphi(p))\| \leq C, ~k = 1, 2 \right).
\end{equation*}

In a natural way one then defines a \emph{bounded isomorphism} between $(\M_1, \Omega_1)$ and $(\M_2, \Omega_2)$.

The property of being a bounded morphism does not depend on the choice of representatives of the corresponding equivalence classes of bounded atlases on $\M_1$ and $\M_2$.

A choice of a bounded atlas on $\M$ leads to a well-defined notion of the length $L(\Gamma)$ of a piecewise-smooth curve $\Gamma$ in $\M$. The corresponding intrinsic metric $\rho$ is consistent with the original topology. A bounded morphism $f \colon (\M_1, \Omega_1) \to (\M_2, \Omega_2)$ is Lipschitz with respect to the corresponding intrinsic metrics.

A choice of a bounded atlas also allows to introduce a norm $\vertiii{\cdot}_p$ on the tangent space $T_p\M$ to the manifold $\M$, defined by $\vertiii{\xi}_p \colonequals \sup_\alpha \|\xi_{\varphi_{\alpha}}\|$, where $\{(U_\alpha, \varphi_\alpha)\}$ is the set of charts of the original atlas, for which $p \in U_\alpha$, and $\xi_\varphi \in E$ is the representation of a tangent vector $\xi$ in a chart $\varphi$. Furthermore, one has the property of \emph{uniform topological isomorphism} of the spaces $T_p\M$ and the model space $E$, namely $\|\xi_\varphi\| \leq \vertiii{\xi}_p \leq K \|\xi_\varphi\|$, where $K$ is the constant from the definition of a bounded atlas and $\varphi$ is a chart at the point $p \in \M$.

On a manifold with a bounded atlas $(\M, \Omega)$ one has a well-defined notion of a \emph{bounded} tensor field $\bs{T}$ of class $C^1$. One assumes that there exists a real number $C > 0$ such that for any chart $(U, \varphi)$, the local representation $\bs{T}_\varphi$ of a tensor $\bs{T}$ satisfies $\|\bs{T}_\varphi(\varphi(x))\| \leq C$ and $\|\bs{T}_\varphi'(\varphi(x))\| \leq C$ for all $x \in \varphi(U)$. Boundedness of a tensor field does not depend on the choice of a bounded atlas from the corresponding equivalence class. We say that such tensor fields are of class $C_b^1(\M)$. In a natural way we define smooth functions of class $C_b^p$ ($p = 0, 1, 2$); $C_b = C_b^0$. We will use this same notation also in the case when the domain of a field or a function is a connected open subset $V$ in $\M$, in $E$ or in the surface in $\M$. A tensor field of class $C_b^1(V)$ is said to be of class $C_0^1(V)$ if its support is bounded and contained in $V$ together with its $\varepsilon$-neighbourhood for some $\varepsilon > 0$.

We say that a bounded atlas $\Omega$ is \emph{uniform} if there exists a real number $r > 0$ such that for any $p \in \M$, there exists a chart $(U, \varphi) \in \Omega$ such that $\varphi(U)$ contains a ball of radius $r$ in $E$ centred at $\varphi(p)$. \cite{Lan99, Dal89, Bog12}

An intrinsic metric on $\M$, induced by a uniform atlas, makes $\M$ into a complete metric space. Furthermore, if a bounded atlas is equivalent to a uniform one, then the metric induced by this atlas is also complete. If an equivalence class of atlases, which defines a bounded structure on $\M$, contains a uniform atlas, we call such a structure \emph{uniform}. If manifolds $\M_1$ and $\M_2$ are boundedly isomorphic, then their structures are either both uniform or non-uniform.

The flow $\Phi(t,x)$ of a vector field $\bs{X}$ of class $C_b^1$ on a manifold $\M$ with a uniform structure is defined on $\R \times \M$. \cite[p.~92]{Lan99}

If $V$ is an open subset of $\R^m$, then, given a manifold with a bounded atlas $(\M, \Omega)$, we agree to define a bounded structure on $\M \times V$ (with a model space $E \oplus \R^m$) by the atlas $\Omega \times \mathrm{id} = \{(U \times V, \varphi \times \mathrm{id}) \colon (U, \varphi) \in \Omega\}$.

An \emph{elementary surface} $\Su \subset \M$ of codimension $m$ is defined as follows. Let $\N$ be a manifold with a bounded structure modeled on a subspace $E_1$ of $E$ of codimension $m$ (from now on we identify $E$ with $E_1 \oplus \R^m$). Let $V$ be an open neighbourhood of $\vec{0} \in \R^m$ and $g \colon \N \times V \to \Uu \subset \M$ be a bounded (straightening) isomorphism onto an open subset $\Uu$ in $\M$. Then, by definition, an elementary surface is $\Su = g(\N \times \{\vec{0}\})$.

For $\varepsilon > 0$, we define
\begin{equation*}
\Su_{-\varepsilon} \colonequals \Su \cap \{x \colon \rho(x, \M \setminus \Uu) \geq \varepsilon\}.
\end{equation*}
Then $\Su = \bigcup\limits_{n = 1}^\infty \Su_{-\frac{1}{n}}$.

We say that a differential $m$-form $\omega$ of class $C_b^1$ defined on $\Uu$ is an \emph{associated $m$-form of the embedding} $\Su \subset \M$ if for any $x \in \Su$, the tangent space $T_x\Su$ is an associated subspace of the exterior form $\omega(x)$ in $T_x\M$ (i.e. $T_x\Su = \{Y \in T_x\M \colon i_Y \omega(x) = 0\}$, where $i_Y$ is the interior product of an exterior form by a vector $Y$).

If $g \colon \N \times V \to \Uu$ is a straightening isomorphism of an elementary surface $\Su$, $P$ is a projection of $\N \times V$ onto $V$ and $h$ is a continuously differentiable function on $V$ such that $h(\vec{0}) \neq 0$, then $\omega = (g^{-1})^*P^*(h \,dt_1 \wedge \dots \wedge dt_m)$ is an example of an associated $m$-form of the embedding $\Su \subset \M$. Note that the constructed $m$-form $\omega$ is closed.

Let us now consider a Borel measure $\mu$ on $\M$. The associated measure $\sigma = \sigma_{\bs{\vec{Y}}}$ is constructed as follows.

We first consider a strictly transversal to $\Su$ system $\bs{\vec{Y}} = \{\bs{Y_1}, \dots, \bs{Y_m}\}$ of pairwise commuting vector fields of class $C_b^1$ defined on $\Uu$. Strict transversality of $\bs{\vec{Y}}$ is understood in the following sense: for each $\varepsilon > 0$, there exists $\delta > 0$ such that for any $x \in \Su_{-\varepsilon}$, one has $|\omega(\bs{\vec{Y}})(x)| = |\omega(\bs{Y_1}, \dots, \bs{Y_m})(x)| \geq \delta$. Existence of such a system of fields was proved in \cite{BM17}.

Let $\Phi_t^{\bs{Y_k}}$ denote the flow of $\bs{Y_k}$. We then define $\Phi_{\vec{t}}^{\bs{\vec{Y}}} \colonequals \Phi_{t_1}^{\bs{Y_1}} \dots \Phi_{t_m}^{\bs{Y_m}}$. One has the property $\Phi_{\vec{t} + \vec{s}}^{\bs{\vec{Y}}} = \Phi_{\vec{t}}^{\bs{\vec{Y}}} \Phi_{\vec{s}}^{\bs{\vec{Y}}}$.

For Borel sets $W \in \B(\R^m)$ and $A \in \B(\M)$, the set $\Phi_W A = \Phi_W^{\bs{\vec{Y}}} A \colonequals \{\Phi_{\vec{t}}^{\bs{\vec{Y}}}(x) \colon \vec{t} \in W, ~x \in A\}$ is Borel in $\M$. Furthermore, for each $\varepsilon > 0$, there exists $p > 0$ such that $(A \in \B(\Su_{-\varepsilon}), \,W \in \B(B_p)) \implies (\Phi_W^{\bs{\vec{Y}}}A \in \B(U))$, where $B_p = \{\vec{t} \colon \|\vec{t}\| < p\} \subset \R^m$. For any set $B \in \B(B_p)$, we define a measure $\nu_B$ on $\B(\Su_{-\varepsilon})$ by $\nu_B(A) \colonequals \mu(\Phi_B^{\bs{\vec{Y}}}A)$.

Let $\lambda_m$ denote the Lebesgue measure on $\R^m$. If for any $A \in \B(\Su_{-\varepsilon})$ the following limit exists
\begin{equation}
\label{limit}
\sigma(A) = \sigma_{\bs{\vec{Y}}}(A) = \lim_{r \to 0} \frac{\nu_{B_r}(A)}{\lambda_m(B_r)},
\end{equation}
then Nikod\'ym's theorem implies that the map $\B(\Su_{-\varepsilon}) \ni A \mapsto \sigma_{\bs{\vec{Y}}}(A) \in \R$ is a Borel measure on $\Su_{-\varepsilon}$. Writing $A \in \B(\Su)$ in the form $A = \bigcup\limits_{n = 1}^\infty (A \cap \Su_{-\frac{1}{n}})$ allows to extend the measure $\sigma_{\bs{\vec{Y}}}$ to $\B(\Su)$.

Sufficient conditions for existence of the limit (\ref{limit}) were established in \cite{BM17}; the authors suggested to call $\sigma_{\bs{\vec{Y}}}$ the \emph{surface measure} on $\Su$ of the first kind induced by the system of vector fields $\bs{\vec{Y}}$.

Throughout the remainder of this paper we always assume that the surface measure exists.

Given $\varepsilon > 0$ and $r > 0$, let $\sigma_r$ denote the measure on $\B(S_{-\varepsilon})$ defined by $\sigma_r(A) \colonequals \frac{1}{\lambda_m(B_r)} \mu(\Phi_{B_r}A)$. Then, (\ref{limit}) implies that $\sigma_r(A) \to \sigma(A)$ as $r \to 0$ for any Borel set $A \subset \Su_{-\varepsilon}$.

The following two lemmas were proved in \cite{BogNew}.

\begin{lemma}
\label{lemma2}
Suppose that $\mu$ is a Radon measure on $\M$. Then for any $\varepsilon > 0$, one has that $\sigma_r$ and $\sigma$ are Radon measures on $\Su_{-\varepsilon}$.
\end{lemma}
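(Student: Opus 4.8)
\emph{Plan.} Since it is already known (via Nikod\'ym's theorem) that $\sigma_r$ and $\sigma$ are Borel measures on $\Su_{-\varepsilon}$, the whole content of the statement is that they are \emph{inner regular with respect to compact sets}; local finiteness is inherited from $\mu$ (the collar over a bounded set is bounded, because the $C_b^1$ fields $\bs{Y_k}$ are Lipschitz for the intrinsic metric), and it is enough to treat Borel sets of finite measure. The key reduction is that $(\Su_{-\varepsilon}, \rho)$ is a metric space, so every finite Borel measure on it is automatically outer regular and inner regular with respect to \emph{closed} sets; hence Radonness reduces to \emph{tightness}, i.e. to approximating a finite-measure set $A$ from inside by compacta. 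Thus I only need, for given $A$ and $\eta>0$, a compact $K\subset A$ with $\sigma_r(A\setminus K)<\eta$ (respectively $\sigma(A\setminus K)<\eta$).

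\emph{The measure $\sigma_r$.} Fix $\varepsilon$ and set $G(\vec{t},x)\colonequals \Phi_{\vec{t}}^{\bs{\vec{Y}}}(x)$ for $(\vec{t},x)\in B_r\times\Su_{-\varepsilon}$, with image the collar $\Uu_r\colonequals \Phi_{B_r}^{\bs{\vec{Y}}}\Su_{-\varepsilon}$. First I would show that for all sufficiently small $r$ the map $G$ is a homeomorphism onto $\Uu_r$: strict transversality of $\bs{\vec{Y}}$ makes $DG_{(\vec{0},x)}$ a bounded linear isomorphism $\R^m\oplus T_x\Su\to T_x\M$, so by the inverse-function theorem together with the straightening isomorphism $g$ (equivalently, the flow-box theorem for the commuting $C_b^1$ fields $\bs{Y_k}$), $G$ is a $C^1$-diffeomorphism onto the open set $\Uu_r$. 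In particular the flow-projection $\pi\colon \Uu_r\to\Su_{-\varepsilon}$, sending $y=\Phi_{\vec{t}}^{\bs{\vec{Y}}}(x)$ to its base point $x$, is continuous. Since $\pi^{-1}(A)=\Phi_{B_r}^{\bs{\vec{Y}}}A$, one obtains the identity $\sigma_r=\frac{1}{\lambda_m(B_r)}\,\pi_*\!\left(\mu\restrictTo{\Uu_r}\right)$, exhibiting $\sigma_r$ as a scalar multiple of the pushforward of a Radon measure under a continuous map. Such a pushforward is inner regular: if $\tilde{K}\subset\pi^{-1}(A)$ is compact with $\mu(\tilde{K})$ close to $\mu(\pi^{-1}(A))$, then $K\colonequals\pi(\tilde{K})\subset A$ is compact and $\pi^{-1}(K)\supseteq\tilde{K}$, whence $\sigma_r(K)\geq\frac{1}{\lambda_m(B_r)}\mu(\tilde{K})$. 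This gives tightness, so $\sigma_r$ is Radon.

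\emph{The measure $\sigma$.} Here I would not seek a single pushforward but transfer tightness from the family $\{\sigma_{r_n}\}$ along a sequence $r_n\downarrow 0$. By the previous step each $\sigma_{r_n}$ is a finite Radon measure (localising to a bounded Borel piece of $\Su_{-\varepsilon}$ if some mass is infinite), and by the very definition \eqref{limit} one has $\sigma_{r_n}(A)\to\sigma(A)$ for every $A\in\B(\Su_{-\varepsilon})$. Put $\lambda\colonequals\sum_{n}2^{-n}\sigma_{r_n}(\Su_{-\varepsilon})^{-1}\sigma_{r_n}$, a finite Borel measure with $\sigma_{r_n}\ll\lambda$ for all $n$; as a countable convex combination of tight measures, $\lambda$ is itself tight. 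The Vitali--Hahn--Saks theorem, applied to the setwise-convergent sequence $(\sigma_{r_n})$, yields uniform $\lambda$-absolute continuity: for every $\eta>0$ there is $\delta>0$ with $\lambda(B)<\delta\implies\sup_n\sigma_{r_n}(B)<\eta$, and passing to the limit the same implication holds for $\sigma$. Choosing, by tightness of $\lambda$, a compact $K$ with $\lambda(\Su_{-\varepsilon}\setminus K)<\delta$ then gives $\sigma(\Su_{-\varepsilon}\setminus K)\leq\eta$, so $\sigma$ is tight and hence Radon. (Alternatively, if one first shows that $G^{*}\mu$ has the form $\theta\,d\lambda_m\,d\sigma$ with $\theta$ continuous and bounded away from $0$ and $\infty$ on $\Su_{-\varepsilon}$, then $\sigma\ll\sigma_{r_0}$ with bounded density and inner regularity transfers directly from $\sigma_{r_0}$.)

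\emph{Main obstacle.} I expect the hard part to be in the first step: proving that $G$ is a homeomorphism onto the collar \emph{uniformly over} $\Su_{-\varepsilon}$, i.e. that a threshold $r_0=r_0(\varepsilon)$ can be chosen below which $G$ is globally injective and bicontinuous. This is exactly where the restriction to $\Su_{-\varepsilon}$ (rather than all of $\Su$), the quantitative transversality $|\omega(\bs{\vec{Y}})|\geq\delta$, and the $C_b^1$-bounds on $\bs{\vec{Y}}$ enter, serving both to prevent the collar from self-intersecting and to keep the inverse-function-theorem radius uniform. Once continuity of $\pi$ is secured, the pushforward computation for $\sigma_r$ and the Vitali--Hahn--Saks passage to the limit for $\sigma$ are routine.
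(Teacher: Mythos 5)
The paper itself contains no proof of this lemma: it is imported verbatim from \cite{BogNew} (``The following two lemmas were proved in [BogNew]''), so there is nothing in the text to compare your argument against. Judged on its own terms, your strategy is sound and essentially complete. The reduction of Radonness to tightness (a finite Borel measure on a metric space is automatically outer regular and inner regular by closed sets), the identification $\sigma_r=\lambda_m(B_r)^{-1}\,q_*\bigl(\mu\restrictTo{\Phi_{B_r}\Su_{-\varepsilon}}\bigr)$ with $q$ the collar projection, the pushforward argument for inner regularity of $\sigma_r$, and the transfer of tightness to the setwise limit $\sigma$ via a dominating measure $\lambda=\sum_n 2^{-n}c_n\sigma_{r_n}$ together with the Vitali--Hahn--Saks theorem are all correct.

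Two remarks. First, the injectivity and bicontinuity of $(\vec t,x)\mapsto\Phi_{\vec t}^{\bs{\vec Y}}x$ on $B_\gamma\times\Su_{-\varepsilon}$, which you rightly single out as the crux, need not be established from scratch: it is already presupposed by the construction of Section \ref{associated_measures} (additivity of $\nu_B$, hence of $\sigma_r$, forces the collars over disjoint sets to be $\mu$-essentially disjoint), and the paper explicitly asserts later that the map $q\colon\Phi_{\vec t}x\mapsto x$ is well defined and of class $C_b^1$ on $\Phi_{B_{\gamma(\varepsilon)}}\Su_{-\varepsilon}$, citing the general theory of differential equations in Banach spaces; you may simply invoke that. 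Second, your localisation to handle possibly infinite total mass should be to sets whose \emph{collar} has finite $\mu$-measure (local finiteness of the Radon measure $\mu$, together with $\Phi_{B_{r_n}}A\subset\Phi_{B_{r_1}}A$ for $r_n<r_1$, supplies such sets around every point), not merely to metrically bounded sets, since in an infinite-dimensional manifold bounded sets need not have finite measure. Finally, the parenthetical alternative at the end --- a disintegration $G^*\mu=\theta\,d\lambda_m\,d\sigma$ with $\theta$ continuous and bounded away from $0$ and $\infty$ --- is not available for a general Radon measure $\mu$ (the absence of such a disintegration is precisely why the surface measure is defined by the limit (\ref{limit})); the Vitali--Hahn--Saks route is the one to keep.
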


\begin{lemma}
\label{lemma3}
Suppose that $\mu$ is a (non-negative) Radon measure on $\M$ and $u \in C_b(\M)$. Then for any $\varepsilon > 0$ and $A \in \B(\Su_{-\varepsilon})$, the following equality holds
\begin{equation*}
\lim_{r \to 0}\frac{1}{\lambda_m(B_r)} \int\limits_{\Phi_{B_r}A} u \,d\mu = \int\limits_A u \,d\sigma.
\end{equation*}
\end{lemma}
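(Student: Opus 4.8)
The plan is to reduce the statement to the defining setwise convergence of the surface measure, \(\sigma_r(A)\to\sigma(A)\) noted just before Lemma \ref{lemma2}, combined with control of how much \(u\) varies along the short flow lines that make up the tube \(\Phi_{B_r}A\). Note first that for a \emph{constant} \(u\equiv c\) the claim is immediate, since \(\frac{1}{\lambda_m(B_r)}\int_{\Phi_{B_r}A}c\,d\mu=c\,\sigma_r(A)\to c\,\sigma(A)=\int_A c\,d\sigma\). The whole difficulty is that \(u\) need not be constant along the fibres \(\{\Phi_{\vec t}^{\bs{\vec Y}}(x):\vec t\in B_r\}\); but since the fields \(\bs{Y_k}\) are of class \(C_b^1\) on a manifold with uniform structure, their flows are Lipschitz for the intrinsic metric, so every point of \(\Phi_{B_r}A\) lies within \(O(r)\) of \(A\), and continuity of \(u\) will render this variation negligible as \(r\to0\).

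First I would fix \(\delta>0\) and, assuming \(\sigma(A)<\infty\), use inner regularity of the Radon measure \(\sigma\) (Lemma \ref{lemma2}) to pick a compact \(K\subset A\) with \(\sigma(A\setminus K)<\delta\). The tail is controlled by boundedness of \(u\): writing \(M=\sup_\M|u|\) and using that tubes over disjoint sets are disjoint (injectivity of \((\vec t,x)\mapsto\Phi_{\vec t}^{\bs{\vec Y}}(x)\) on \(B_r\times\Su_{-\varepsilon}\), which is exactly what makes \(\sigma_r\) a measure), one gets \(\frac{1}{\lambda_m(B_r)}\bigl|\int_{\Phi_{B_r}(A\setminus K)}u\,d\mu\bigr|\le M\,\sigma_r(A\setminus K)\to M\,\sigma(A\setminus K)\le M\delta\). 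On the compact part, the map \((\vec t,x)\mapsto u(\Phi_{\vec t}^{\bs{\vec Y}}(x))\) is continuous on the compact set \(\overline{B_{r_0}}\times K\) (for \(r_0\) small enough that the tube stays in \(\Uu\)), hence uniformly continuous, so \(\theta(r)\colonequals\sup_{x\in K,\ \|\vec t\|<r}|u(\Phi_{\vec t}^{\bs{\vec Y}}(x))-u(x)|\to0\) as \(r\to0\).

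Next I would partition \(K\) into finitely many Borel pieces \(K_1,\dots,K_N\) by slicing the range of \(u|_K\) into intervals of length \(<\delta\) and choosing \(c_j\) in the \(j\)-th interval; then \(|u-c_j|<\delta\) on \(K_j\) and \(|u-c_j|<\delta+\theta(r)\) on the disjoint tubes \(\Phi_{B_r}K_j\). This yields
\[
\Bigl|\tfrac{1}{\lambda_m(B_r)}\!\int_{\Phi_{B_r}K}u\,d\mu-\textstyle\sum_j c_j\,\sigma_r(K_j)\Bigr|\le(\delta+\theta(r))\,\sigma_r(K).
\]
Letting \(r\to0\) and invoking \(\sigma_r(K_j)\to\sigma(K_j)\), \(\sigma_r(K)\to\sigma(K)\) and \(\theta(r)\to0\), then comparing \(\sum_j c_j\sigma(K_j)\) with \(\int_K u\,d\sigma\) (within \(\delta\,\sigma(K)\)) and \(\int_K u\,d\sigma\) with \(\int_A u\,d\sigma\) (within \(M\delta\)), I would obtain \(\limsup_{r\to0}\bigl|\frac{1}{\lambda_m(B_r)}\int_{\Phi_{B_r}A}u\,d\mu-\int_A u\,d\sigma\bigr|\le (2M+2\sigma(A))\delta\). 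Since \(\delta>0\) is arbitrary, the limit equals \(\int_A u\,d\sigma\).

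The main obstacle is the absence of global uniform continuity of \(u\) (it is merely bounded and continuous) on the infinite-dimensional, generally non-compact \(\M\): the flow-variation \(\theta(r)\) cannot be bounded uniformly over all of \(\Su_{-\varepsilon}\). This is precisely what forces the passage to a compact \(K\) via Radon inner regularity of \(\sigma\), with the discarded tail absorbed using the setwise convergence \(\sigma_r\to\sigma\) and \(|u|\le M\). A secondary point to verify is the disjointness of the tubes \(\Phi_{B_r}K_j\), i.e. injectivity of the flow map on \(B_r\times\Su_{-\varepsilon}\) for small \(r\); this is implicit in \(\sigma_r\) being a genuine Radon measure by Lemma \ref{lemma2}.
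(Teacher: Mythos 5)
The paper does not actually prove Lemma \ref{lemma3}: it is quoted from \cite{BogNew} together with Lemma \ref{lemma2}, so there is no in-text proof to compare your argument against. Judged on its own terms, your proof is correct and complete, and it uses exactly the ingredients the paper makes available. The two real difficulties are the ones you isolate: the hypothesis only supplies setwise convergence $\sigma_r(A)\to\sigma(A)$, which handles constant (and hence, after partitioning, simple) integrands but says nothing about the variation of $u$ along the fibres $\{\Phi_{\vec t}^{\bs{\vec{Y}}}x\}$ of the tube; and $u\in C_b(\M)$ is merely bounded and continuous, so that variation $\theta(r)$ can only be controlled on a compact core. Your combination of inner regularity of the Radon measure $\sigma$ (Lemma \ref{lemma2}) to extract $K\subset A$ with $\sigma(A\setminus K)<\delta$, the bound $M\sigma_r(A\setminus K)$ for the discarded tube, the slicing of $u|_K$ into level sets $K_j$ with $\sigma_r(K_j)\to\sigma(K_j)$, and the estimate $|u-c_j|\le\delta+\theta(r)$ on $\Phi_{B_r}K_j$ (valid since $(\vec t,x)\mapsto u(\Phi_{\vec t}^{\bs{\vec{Y}}}x)$ is uniformly continuous on the compact set $\overline{B_{r_0}}\times K$) closes the argument; note also that $\sigma(A)<\infty$ is automatic because the limit defining $\sigma$ is assumed real-valued. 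The one step worth spelling out rather than leaving implicit is the disjointness of the tubes over disjoint Borel sets, i.e.\ injectivity of $(\vec t,x)\mapsto\Phi_{\vec t}^{\bs{\vec{Y}}}x$ on $B_r\times\Su_{-\varepsilon}$ for $r$ below the threshold $p$ of Section \ref{associated_measures}: as you observe, it is already needed for $\nu_B$ and $\sigma_r$ to be countably additive, and it is what legitimises splitting $\int_{\Phi_{B_r}A}$ into $\int_{\Phi_{B_r}K}+\int_{\Phi_{B_r}(A\setminus K)}$ and $\sum_j\int_{\Phi_{B_r}K_j}$.
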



\section{Multivector fields and divergence operator}

Let $\M$ be a Banach manifold with a bounded structure and $\mu$ be a (non-negative) Borel measure on $\M$. We say that a $k$-vector field $\bs{\vec{Z}}$ on $\M$ is \emph{$\mu$-measurable} if there exists a sequence of continuous $k$-vector fields $\bs{\vec{Z}_n}$ such that $\lim\limits_{n \to \infty} \vertiii{\bs{\vec{Z}_n}(p) - \bs{\vec{Z}}(p)}_p = 0 ~(\mathrm{mod} \,\mu)$ (here $\vertiii{\cdot}_p$ is the norm in $\bigwedge\nolimits^k(T_p\M)$, see Section \ref{associated_measures}).

For a measurable multivector field $\bs{\vec{Z}}$, the function $x \mapsto \vertiii{\bs{\vec{Z}}(x)}_x$ is $\mu$-measurable on $\M$. In the case when this function is integrable on $\M$ with respect to $\mu$ we say that $\bs{\vec{Z}}$ is \emph{integrable}: $\bs{\vec{Z}} \in L_1(\mu)$ (see \cite{BP16}). In a similar way one defines multivector fields of class $L_p(\mu)$ ($1 < p \leq \infty$).

It is easy to check that if vector fields $\bs{Z_2}, \dots, \bs{Z_k}$ are measurable and bounded on $\M$, and $\bs{Z_1}$ is a vector field of class $L_p(\mu)$, then $\bs{\vec{Z}} = \bs{Z_1} \wedge \dots \wedge \bs{Z_k} \in L_p(\mu)$. One can also prove that $(\bs{\vec{Z}} \in L_p(\mu), \,\omega \text{ is a differential } k\text{-form of class } C_b(\M)) \implies (\omega(\bs{\vec{Z}}) \in L_p(\mu))$.

Linear combinations of decomposable $k$-vector fields of class $L_p(\mu)$ form a vector space, which we will denote by $L_p \bigwedge\nolimits^k(\mu)$.

\begin{definition}
\label{div_definition}
Let $\bs{\vec{Z}} = \bs{Z_1} \wedge \dots \wedge \bs{Z_k}$ be a $k$-vector field of class $C_b^1(\M)$ (that is, $\bs{Z_i} \in C_b^1(\M)$ for $i = 1, \dots, k$). We call a $(k - 1)$-vector field $\bs{\vec{W}}$ a \emph{divergence} of $\bs{\vec{Z}}$ ($\bs{\vec{W}} = \dive{\bs{\vec{Z}}}; \bs{\vec{Z}} \in D(\dive)$) if for any differential $(k - 1)$-form $\omega \in C_0^1(\M)$ the following equality holds
\begin{equation}
\label{div_definition_equation}
\int\limits_\M \langle \omega, \bs{\vec{W}} \rangle \,d\mu = - \int\limits_\M \langle \ed{\omega}, \bs{\vec{Z}} \rangle \,d\mu.
\end{equation}  
\end{definition}

In an obvious way Definition \ref{div_definition} extends to linear combinations of decomposable multivector fields.

\begin{theorem}
\label{uniqueness_theorem}
Suppose that there exists a function of class $C_0^1$ on $E$ with a non-empty bounded support (it suffices to assume that $E$ is reflexive, see \cite{FM02}) and $\mu$ is a Radon measure. Then for any $k$-vector field $\bs{\vec{Z}}$ of class $C_b^1$, there exists no more that one element $\bs{\vec{W}} \in L_1 \bigwedge^{k - 1}(\mu)$ which satisfies Definition \ref{div_definition}.
\end{theorem}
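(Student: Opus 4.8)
The plan is to show that the difference of two divergences annihilates every admissible test form, and then to argue, by a fundamental-lemma-of-the-calculus-of-variations argument adapted to the Radon-measure setting, that such a multivector field must vanish $\mu$-almost everywhere. So suppose $\bs{\vec{W}}_1, \bs{\vec{W}}_2 \in L_1 \bigwedge^{k-1}(\mu)$ both satisfy Definition \ref{div_definition} for the same $\bs{\vec{Z}}$, and set $\bs{\vec{U}} \colonequals \bs{\vec{W}}_1 - \bs{\vec{W}}_2$. Subtracting the two instances of (\ref{div_definition_equation}) yields $\int_\M \langle \omega, \bs{\vec{U}} \rangle \, d\mu = 0$ for every differential $(k-1)$-form $\omega \in C_0^1(\M)$, and the whole theorem reduces to showing that this forces $\bs{\vec{U}} = 0 \pmod{\mu}$.

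First I would localise. Being $\mu$-measurable, $\bs{\vec{U}}$ is $\mu$-essentially separably valued (Pettis), and since $\mu$ is Radon it is concentrated on a $\sigma$-compact, hence Lindelöf, subset of $\M$, which is covered up to a $\mu$-null set by countably many chart domains of a fixed bounded atlas. It therefore suffices to prove that in each such chart $(U, \varphi)$ the local representative $\bs{\vec{U}}_\varphi \colon \varphi(U) \to \bigwedge^{k-1}(E)$ vanishes $\mu$-a.e. Fix a bounded alternating $(k-1)$-form $\ell$ on $E$ (a constant-coefficient form in the chart) and a scalar bump function $h \in C_0^1$ with bounded support in $\varphi(U)$. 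Transporting $h\,\ell$ back to $\M$ and extending by zero produces an admissible test form $\omega \in C_0^1(\M)$, for which $\langle \omega, \bs{\vec{U}} \rangle = h \cdot \langle \ell, \bs{\vec{U}}_\varphi \rangle$ on $U$. Writing $f \colonequals \langle \ell, \bs{\vec{U}}_\varphi \rangle \in L_1(\mu\restrictTo{U})$, the annihilation condition becomes $\int h f \, d\mu = 0$ for every such $h$.

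The crux is the resulting scalar statement: if $f \in L_1(\mu\restrictTo{U})$ satisfies $\int h f \, d\mu = 0$ for all $h \in C_0^1$ with bounded support in $\varphi(U)$, then $f = 0$ $\mu$-a.e. Because $\mu$ is Radon and $f \in L_1$, the measure $\nu \colonequals f\,\mu\restrictTo{U}$ is a finite signed Radon measure, and our condition says $\nu$ annihilates every such $h$. Here the hypothesis enters: the existence of a $C_0^1$ bump function $\psi$ with bounded support on $E$ (guaranteed when $E$ is reflexive, cf.\ \cite{FM02}), together with its rescalings and translates $x \mapsto \psi(\lambda(x-a))$ and the attendant $C_0^1$ partitions of unity, lets one approximate every continuous bounded-support function on $\varphi(U)$ uniformly by functions $h \in C_0^1$. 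By dominated convergence ($|\nu|$ is finite) $\nu$ then annihilates all continuous bounded-support functions; since finite signed Radon measures on a metric space are determined, through their Jordan decomposition and tightness, by their integrals against such functions, we conclude $\nu = 0$, that is, $f = 0$ $\mu$-a.e.

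Finally I would select, on the essential (separable) range of $\bs{\vec{U}}_\varphi$, a countable family $\{\ell_n\}$ of bounded alternating $(k-1)$-forms separating the points of $\bigwedge^{k-1}(E)$ met by that range (possible by Hahn--Banach and separability). Applying the previous step to each $\ell_n$ gives $\langle \ell_n, \bs{\vec{U}}_\varphi \rangle = 0$ $\mu$-a.e., whence $\bs{\vec{U}}_\varphi = 0$ $\mu$-a.e.\ on $U$; summing over the countable chart cover yields $\bs{\vec{U}} = 0 \pmod{\mu}$, proving uniqueness. I expect the main obstacle to be the scalar statement of the third paragraph -- in particular, upgrading the single given bump function to a genuine uniform approximation of continuous functions in the absence of convolution on $E$, and thereby ruling out that a nonzero $L_1$ density could be invisible to all $C_0^1$ test forms. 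This is precisely the step for which the bump-function hypothesis is indispensable.
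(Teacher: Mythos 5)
Your architecture is the same as the paper's: reduce to the vanishing of $\bs{\vec{U}}=\bs{\vec{W}_1}-\bs{\vec{W}_2}$, localise to a chart, prove a scalar lemma of the form ``$\int hf\,d\mu=0$ for all $h\in C_0^1$ forces $f=0$ a.e.'', and finish with a countable family of $(k-1)$-forms separating the points of the separable essential range of $\bs{\vec{U}}_\varphi$ (the paper reaches separability via Lusin's theorem rather than Pettis, but that is cosmetic). The gap is in the scalar lemma, exactly the step you flag as the crux. You derive it from ``$C_0^1$ partitions of unity'' and a uniform approximation of continuous bounded-support functions. Two objections. First, the theorem assumes only the existence of a single $C_0^1$ bump function on $E$; whether this implies the existence of $C^1$ partitions of unity on a general Banach space is a known open problem (the implication does hold for WCG, in particular reflexive, spaces, which is what the parenthetical remark covers, but the theorem is stated for any $E$ admitting a bump). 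Second, even with partitions of unity subordinate to covers by $\delta$-balls, the approximant $\sum_i g(x_i)\rho_i$ converges to $g$ \emph{uniformly} only when $g$ is uniformly continuous, and a continuous function with bounded support on an infinite-dimensional $E$ need not be uniformly continuous, since bounded sets are not compact. The second defect is repairable (the approximants are uniformly bounded and converge pointwise, and $|\nu|$ is finite, so dominated convergence suffices); the first is not, without strengthening the hypothesis.

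The paper's Step 2 circumvents both issues using only the given bump $u$: the translates, dilations and post-compositions $h\circ u(kx+b)$ yield a family $\{u_\alpha\}\subset C_0^1$ with values in $[0,1]$ whose positivity sets $U_\alpha$ form a base of the topology, closed under finite unions (take finite sums). Dominated convergence of $h_n\circ u_\alpha$ to the indicator of $U_\alpha$ gives $\int_{U_\alpha}f\,d\mu_\varphi=0$; squeezing a compact $K$ as $K\subset U_\alpha\subset K_\varepsilon$ gives $\int_K f\,d\mu_\varphi=0$; inner regularity of the Radon measure then gives $f=0$ a.e. If you replace your third paragraph with this indicator-approximation argument (or explicitly restrict to reflexive $E$), the remainder of your proof goes through and is essentially the paper's.
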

\begin{proof}
It suffices to show that if $\bs{\vec{W}} \neq \vec{0} ~(\mathrm{mod}\,\mu)$ then there exists a $(k - 1)$-form $\omega \in C_0^1(\M)$ such that $\int\limits_\M\langle \omega, \bs{\vec{W}} \rangle \,d\mu \neq 0$.
 
\textbf{Step 1}. Since $\mu$ is Radon, there exists a compact set $L \subset \M$ with $\mu(L) > 0$ such that $\bs{\vec{W}}(x) \neq 0$ for each $x \in L$ and hence, there is a chart $\varphi \colon V \to \varphi(V) \subset E$ for which
\begin{equation}
\label{condition_step1}
\mu \left(\{ x \in V \colon \bs{\vec{W}}(x) \neq 0 \} \right) > 0.
\end{equation}

The homeomorphism $\varphi$ induces a Radon measure $\mu_\varphi$ on $\varphi(V)$ and a tensor field $\bs{\vec{W}}_\varphi$. One has $\bs{\vec{W}}_\varphi \in L_1\bigwedge\nolimits^k(\mu_\varphi)$.

\textbf{Step 2}. Let $\alpha$ be an exterior $(k - 1)$-form on $E$. Then $f \colonequals \langle \alpha, \bs{\vec{W}}_\varphi \rangle \in L_1(\mu_\varphi)$. Assuming that $\int\limits_{\varphi(V)} uf \,d\mu_\varphi = 0$ for any function $u \in C_0^1(\varphi(V))$, we will show that $f = 0 ~(\mathrm{mod} \,\mu_\varphi)$.

If $u \in C_0^1(E)$ such that $U = \{x \colon u(x) > 0 \} \neq \varnothing$ then for any function $h \in C^1(\R)$, such that $h(0) = 0$, number $k \in \R$ and vector $b \in E$ the function $v(x) = h \circ u(kx + b)$ also lies in $C_0^1(E)$. Therefore, there exists a family of functions $u_\alpha \in C_0^1(E)$ with values in $[0, 1]$ such that the sets $U_\alpha = \{x \colon u_\alpha(x) > 0\}$ form a base of the topology of $E$.

By applying Lebesgue's dominated convergence theorem, we conclude that $\int_{U_\alpha} f \,d\mu_\varphi = 0$ for any $U_\alpha$. Since the family $\{U_\alpha\}$ is closed under finite unions, for any compact set $K \subset \varphi(V)$ and $\varepsilon > 0$, there exists $U_\alpha$ such that $K \subset U_\alpha \subset K_\varepsilon$ (here and henceforth $A_\varepsilon$ denotes the $\varepsilon$-neighbourhood of a set $A$), which implies $\int_K f \,d\mu_\varphi = 0$. Since $\mu_\varphi$ is Radon, $\int_A f \,d\mu_\varphi = 0$ for any $A \in \B(\varphi(V))$, that is, $f = 0 ~(\mathrm{mod}\,\mu_\varphi)$.

\textbf{Step 3}. By applying generalised Lusin's theorem (see \cite{Fre81}) to $\bs{\vec{W}}_\varphi$ and using (\ref{condition_step1}), we get that there exists a compact set $K \subset \varphi(V)$ such that $\bs{\vec{W}}_\varphi\restrictTo{K}$ is continuous on $K$ and $\mu_\varphi \left( \{x \in K \colon \bs{\vec{W}}_\varphi(x) \neq 0\} \right) > 0$. 

The set $\bs{\vec{W}}_\varphi(K)$ lies in a separable subspace $F$ of the space $\bigwedge\nolimits^{k - 1} E$, and therefore there exists a countable family $\{\beta_n\}_{n \in \mathbb{N}}$ of exterior $(k - 1)$-forms on $E$ that separates the points of F. But Step 2 implies that $\langle \beta_n, \bs{\vec{W}}_\varphi \rangle = 0 ~(\mathrm{mod}\,\mu_\varphi)$ for all $n \in \mathbb{N}$ and hence, $\mu_\varphi \left( \{x \in K \colon \bs{\vec{W}}_\varphi(x) \neq 0 \} \right) = 0$, which is a contradiction. 
\end{proof}

\begin{proposition}
Suppose that a vector field $\bs{X}$ and $k$-vector field $\bs{\vec{Z}}$ lie in $C_b^1(\M) \cap D(\dive)$. Then $\bs{X} \wedge \bs{\vec{Z}} \in C_b^1(\M) \cap D(\dive)$ and the following equality holds 
\begin{equation}
\label{inductive_formula2}
\dive(\bs{X} \wedge \bs{\vec{Z}}) = \dive{\bs{X}} \cdot \bs{\vec{Z}} - \bs{X} \wedge \dive{\bs{\vec{Z}}} + \Lie_{\bs{X}} \bs{\vec{Z}}.
\end{equation}
\end{proposition}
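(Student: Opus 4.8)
The plan is to verify directly that the $k$-vector field
\[
\bs{\vec{W}} = \dive{\bs{X}} \cdot \bs{\vec{Z}} - \bs{X} \wedge \dive{\bs{\vec{Z}}} + \Lie_{\bs{X}} \bs{\vec{Z}}
\]
satisfies the defining identity \eqref{div_definition_equation} for the $(k+1)$-vector field $\bs{X} \wedge \bs{\vec{Z}}$; that is, for every $k$-form $\omega \in C_0^1(\M)$ one has $\int_\M \langle \omega, \bs{\vec{W}} \rangle \,d\mu = -\int_\M \langle \ed\omega, \bs{X} \wedge \bs{\vec{Z}} \rangle \,d\mu$. The membership $\bs{X} \wedge \bs{\vec{Z}} \in C_b^1(\M)$ is immediate, and each summand of $\bs{\vec{W}}$ is a $\mu$-measurable $k$-vector field whose pairing with a compactly supported form is integrable: indeed $\dive{\bs{X}} \in L_1(\mu)$ and $\dive{\bs{\vec{Z}}} \in L_1\bigwedge\nolimits^{k-1}(\mu)$ exist by hypothesis, while $\bs{\vec{Z}}$, $\bs{X}$ and $\Lie_{\bs{X}} \bs{\vec{Z}}$ are of class $C_b$ (the Lie derivative involves only first derivatives of $C_b^1$ fields). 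So both integrals are well defined and it remains to match them.

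First I would rewrite the right-hand side. By the pairing property $\langle i_{\bs{X}} \eta, \bs{\vec{Z}} \rangle = \langle \eta, \bs{X} \wedge \bs{\vec{Z}} \rangle$ and Cartan's formula $i_{\bs{X}} \ed\omega = \Lie_{\bs{X}}\omega - \ed i_{\bs{X}}\omega$ (valid at $C^1$-regularity in a chart),
\[
\langle \ed\omega, \bs{X} \wedge \bs{\vec{Z}} \rangle = \langle i_{\bs{X}} \ed\omega, \bs{\vec{Z}} \rangle = \langle \Lie_{\bs{X}}\omega, \bs{\vec{Z}} \rangle - \langle \ed(i_{\bs{X}}\omega), \bs{\vec{Z}} \rangle.
\]
Since $\omega \in C_0^1(\M)$ and $\bs{X} \in C_b^1(\M)$, the contraction $i_{\bs{X}}\omega$ is a $(k-1)$-form of class $C_0^1(\M)$, so I may apply Definition \ref{div_definition} to $\bs{\vec{Z}} \in D(\dive)$ with test form $i_{\bs{X}}\omega$, obtaining $\int_\M \langle \ed(i_{\bs{X}}\omega), \bs{\vec{Z}} \rangle \,d\mu = -\int_\M \langle i_{\bs{X}}\omega, \dive{\bs{\vec{Z}}} \rangle \,d\mu = -\int_\M \langle \omega, \bs{X} \wedge \dive{\bs{\vec{Z}}} \rangle \,d\mu$, the last step again using the pairing property. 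This produces the term $-\bs{X} \wedge \dive{\bs{\vec{Z}}}$.

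The remaining and decisive step is to convert $\int_\M \langle \Lie_{\bs{X}}\omega, \bs{\vec{Z}} \rangle \,d\mu$ into the other two terms. The key is the Leibniz rule $\Lie_{\bs{X}} \langle \omega, \bs{\vec{Z}} \rangle = \langle \Lie_{\bs{X}}\omega, \bs{\vec{Z}} \rangle + \langle \omega, \Lie_{\bs{X}} \bs{\vec{Z}} \rangle$, which holds because $\Lie_{\bs{X}}$ is a derivation commuting with the duality pairing (a pointwise computation in a chart). The scalar $g = \langle \omega, \bs{\vec{Z}} \rangle$ lies in $C_0^1(\M)$ and satisfies $\Lie_{\bs{X}} g = \langle \ed g, \bs{X} \rangle$, so Definition \ref{div_definition} applied to $\bs{X} \in D(\dive)$ in degree one gives $\int_\M \Lie_{\bs{X}} g \,d\mu = -\int_\M g \,\dive{\bs{X}} \,d\mu$. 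Combining these yields $\int_\M \langle \Lie_{\bs{X}}\omega, \bs{\vec{Z}} \rangle \,d\mu = -\int_\M \langle \omega, \Lie_{\bs{X}} \bs{\vec{Z}} \rangle \,d\mu - \int_\M \langle \omega, \dive{\bs{X}} \cdot \bs{\vec{Z}} \rangle \,d\mu$, and substituting the two computations back assembles exactly $-\int_\M \langle \omega, \bs{\vec{W}} \rangle \,d\mu$ on the right.

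I expect the main obstacle to be precisely this last step: isolating the measure-theoretic content. Whereas the classical Proposition \ref{proposition1} manipulates the volume form $\Omega$ pointwise, here there is no $\Omega$, and the identity must instead be extracted from the weak divergence of the scalar $\langle \omega, \bs{\vec{Z}} \rangle$ against $\bs{X}$. The care needed is twofold: to justify the Leibniz and Cartan identities at the $C^1$-regularity available on a $C^2$ Banach manifold, working in a single chart; and to confirm that both $i_{\bs{X}}\omega$ and $\langle\omega,\bs{\vec{Z}}\rangle$ belong to $C_0^1(\M)$ so that Definition \ref{div_definition} applies, which follows from $\omega \in C_0^1(\M)$ together with the boundedness of $\bs{X}$ and $\bs{\vec{Z}}$. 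Since each summand of $\bs{\vec{W}}$ pairs integrably with every $C_0^1$ form, the verified identity shows $\bs{X} \wedge \bs{\vec{Z}} \in D(\dive)$ with $\dive(\bs{X} \wedge \bs{\vec{Z}}) = \bs{\vec{W}}$; when $\bs{\vec{W}}$ moreover lies in $L_1\bigwedge\nolimits^k(\mu)$, Theorem \ref{uniqueness_theorem} confirms it is the only such divergence.
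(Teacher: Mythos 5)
Your proof is correct and follows essentially the same route as the paper: the paper's single displayed identity $\langle i_{\bs{X}}\ed\omega,\bs{\vec{Z}}\rangle = \bs{X}\langle\omega,\bs{\vec{Z}}\rangle - \langle\ed i_{\bs{X}}\omega,\bs{\vec{Z}}\rangle - \langle\omega,\Lie_{\bs{X}}\bs{\vec{Z}}\rangle$ is exactly your combination of Cartan's formula with the Leibniz rule for the pairing, and the subsequent integrations against $\mu$ (testing $\dive\bs{\vec{Z}}$ with $i_{\bs{X}}\omega$ and $\dive\bs{X}$ with the scalar $\langle\omega,\bs{\vec{Z}}\rangle$) are identical. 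Your added remarks on integrability and on uniqueness via Theorem \ref{uniqueness_theorem} are sensible supplements but not a different argument.
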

\begin{proof}
Let $\omega$ be a differential $k$-form of class $C_0^1$ on $\M$. One has the equality
\begin{equation}
\label{auxiliary_equality_for_inductive_formula2}
\langle \ed{\omega}, \bs{X} \wedge \bs{\vec{Z}} \rangle = \langle i_{\bs{X}} \ed{\omega}, \bs{\vec{Z}} \rangle = \bs{X} \langle \omega, \bs{\vec{Z}} \rangle - \langle \ed i_{\bs{X}} \omega, \bs{\vec{Z}}  \rangle - \langle \omega, \Lie_{\bs{X}} \bs{\vec{Z}} \rangle.
\end{equation}

Now, by combining (\ref{div_definition_equation}) and (\ref{auxiliary_equality_for_inductive_formula2}), we get
\begin{equation*}
\int\limits_\M \langle \ed{\omega}, \bs{X} \wedge \bs{\vec{Z}} \rangle \,d\mu = - \int\limits_\M \langle \omega, -\dive{\bs{X}} \cdot \bs{\vec{Z}} + \bs{X} \wedge \dive{\bs{\vec{Z}}} - \Lie_{\bs{X}} \bs{\vec{Z}} \rangle \,d\mu,
\end{equation*}
which proves the proposition. 
\end{proof}

\begin{corollary}
If $\bs{\vec{Z}} = \bs{Z_1} \wedge \dots \wedge \bs{Z_k}$ and all $\bs{Z_i} \in C_b^1(\M) \cap D(\dive)$, then $\bs{\vec{Z}} \in C_b^1(\M) \cap D(\dive)$.
\end{corollary}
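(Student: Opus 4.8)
The plan is to argue by induction on the degree $k$, using the preceding Proposition as the engine of the induction. The key observation is that the Proposition is precisely the statement that wedging a single vector field lying in $C_b^1(\M) \cap D(\dive)$ onto a multivector field in $C_b^1(\M) \cap D(\dive)$ keeps us inside $C_b^1(\M) \cap D(\dive)$; iterating this one factor at a time builds up an arbitrary decomposable field from its vector-field constituents.

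For the base case $k = 1$ the claim is immediate: $\bs{\vec{Z}} = \bs{Z_1} \in C_b^1(\M) \cap D(\dive)$ is exactly the hypothesis. For the inductive step, I would assume the statement for $(k-1)$-vector fields. Given $\bs{\vec{Z}} = \bs{Z_1} \wedge \dots \wedge \bs{Z_k}$ with each $\bs{Z_i} \in C_b^1(\M) \cap D(\dive)$, set $\bs{\vec{Z}}' \colonequals \bs{Z_2} \wedge \dots \wedge \bs{Z_k}$. Since $\bs{Z_2}, \dots, \bs{Z_k}$ form a subfamily of the given vector fields, each of them lies in $C_b^1(\M) \cap D(\dive)$, so the inductive hypothesis yields $\bs{\vec{Z}}' \in C_b^1(\M) \cap D(\dive)$. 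Applying the Proposition with $\bs{X} = \bs{Z_1}$ and the $(k-1)$-vector field $\bs{\vec{Z}}'$ — both of which now lie in $C_b^1(\M) \cap D(\dive)$ — gives $\bs{\vec{Z}} = \bs{Z_1} \wedge \bs{\vec{Z}}' \in C_b^1(\M) \cap D(\dive)$, which closes the induction.

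I do not expect any genuine obstacle here: the entire analytic content of the corollary is already packaged in the Proposition, and what remains is only the bookkeeping of peeling off one factor at a time. The single point to keep straight is that at each stage the smaller field $\bs{\vec{Z}}'$ must itself be verified to lie in the domain $D(\dive)$ before the Proposition can be invoked; this is supplied by the inductive hypothesis rather than by any hypothesis on $\bs{\vec{Z}}$ directly, which is precisely why the argument must be phrased as an induction rather than as a single application of the Proposition.
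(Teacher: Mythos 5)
Your proof is correct and is exactly the argument the paper intends: the corollary is stated without proof as an immediate consequence of the preceding Proposition, obtained by induction on $k$, peeling off one factor at a time. Your remark that the inductive hypothesis (rather than any direct hypothesis on $\bs{\vec{Z}}$) is what places $\bs{Z_2} \wedge \dots \wedge \bs{Z_k}$ in $D(\dive)$ before the Proposition can be applied is the right point to make explicit.
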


\begin{proposition}
Suppose that an $m$-vector field $\bs{\vec{Z}}$ lies in $C_b^1(\M) \cap D(\dive)$ and let $\omega$ be a differential $k$-form ($k < m$) of class $C_b^1(\M)$. Then, $j(\omega) \bs{\vec{Z}}$ also lies in $C_b^1(\M) \cap D(\dive)$ and the following Leibniz rule holds
\begin{equation*}
\dive(j(\omega) \bs{\vec{Z}}) = (-1)^k j(\ed{\omega}) \bs{\vec{Z}} + (-1)^k j(\omega) \dive{\bs{\vec{Z}}}.
\end{equation*}
\end{proposition}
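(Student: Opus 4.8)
The plan is to follow the template of the two preceding propositions, but with one essential difference: on a Banach manifold there is no volume form at our disposal, so the Hodge-type identity (\ref{auxiliary_formula}) that drove the finite-dimensional computation is unavailable. Instead I would work entirely through the defining integration-by-parts relation (\ref{div_definition_equation}) together with the pairing identity $\langle \eta, j_\omega \bs{\vec{X}} \rangle = \langle \omega \wedge \eta, \bs{\vec{X}} \rangle$ stated just before the statement. First I would dispose of the regularity claim: since $\omega \in C_b^1(\M)$ and $\bs{\vec{Z}} \in C_b^1(\M)$, and $j(\cdot)\cdot$ is a bounded bilinear algebraic operation compatible with differentiation (pointwise $\vertiii{j(\omega)\bs{\vec{Z}}}_p \leq C\,\|\omega(p)\|\,\vertiii{\bs{\vec{Z}}(p)}_p$, with the analogous bound on the first derivative), the $(m-k)$-vector field $j(\omega)\bs{\vec{Z}}$ is again of class $C_b^1(\M)$.

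For the divergence, I would fix an arbitrary test form $\eta \in C_0^1(\M)$ of degree $m-k-1$ and examine $\int_\M \langle \ed{\eta}, j(\omega)\bs{\vec{Z}} \rangle \,d\mu$. The pairing identity rewrites the integrand as $\langle \omega \wedge \ed{\eta}, \bs{\vec{Z}} \rangle$, and the graded Leibniz rule $\ed(\omega \wedge \eta) = \ed{\omega} \wedge \eta + (-1)^k \omega \wedge \ed{\eta}$ lets me substitute
\[
\omega \wedge \ed{\eta} = (-1)^k \ed(\omega \wedge \eta) - (-1)^k \ed{\omega} \wedge \eta.
\]
The key observation at this point is that $\omega \wedge \eta$ is an $(m-1)$-form of class $C_0^1(\M)$ — it is $C^1$ by the product rule, bounded, and supported inside the support of $\eta$ — so that the hypothesis $\bs{\vec{Z}} \in D(\dive)$ may legitimately be applied to it via (\ref{div_definition_equation}). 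After this single integration by parts the $\ed(\omega \wedge \eta)$ term becomes $-\int_\M \langle \omega \wedge \eta, \dive{\bs{\vec{Z}}} \rangle \,d\mu$, and a second application of the pairing identity (now with the $(m-1)$-vector field $\dive{\bs{\vec{Z}}}$) converts it into $-\int_\M \langle \eta, j(\omega)\dive{\bs{\vec{Z}}} \rangle \,d\mu$, while the remaining term is turned directly into $\int_\M \langle \eta, j(\ed{\omega})\bs{\vec{Z}} \rangle \,d\mu$.

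Collecting the two contributions gives
\[
\int_\M \langle \ed{\eta}, j(\omega)\bs{\vec{Z}} \rangle \,d\mu = -\int_\M \langle \eta,\, (-1)^k j(\ed{\omega})\bs{\vec{Z}} + (-1)^k j(\omega)\dive{\bs{\vec{Z}}} \rangle \,d\mu,
\]
which is precisely (\ref{div_definition_equation}) for the field $j(\omega)\bs{\vec{Z}}$ with the asserted divergence; this simultaneously establishes $j(\omega)\bs{\vec{Z}} \in D(\dive)$ and the Leibniz formula.

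I expect the main obstacle to be bookkeeping rather than conceptual: keeping every sign correct across the graded Leibniz rule and the two uses of the pairing identity, and checking the degree matching at each stage (so that $\ed{\eta}$ has degree $m-k$, $\omega \wedge \eta$ has degree $m-1$, $\ed{\omega} \wedge \eta$ has degree $m$, and each $j$-contraction lands in degree $m-k-1$). A secondary but genuine point to verify is the regularity/support bookkeeping, namely that $\omega \wedge \eta \in C_0^1(\M)$ and $\ed{\omega} \in C_b(\M)$, since these are exactly what license applying the divergence of $\bs{\vec{Z}}$ and what guarantee that both $j(\ed{\omega})\bs{\vec{Z}}$ and $j(\omega)\dive{\bs{\vec{Z}}}$ lie in the integrability class required for the resulting field to qualify as a divergence in the sense of Definition \ref{div_definition}.
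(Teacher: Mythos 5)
Your proposal is correct and follows essentially the same route as the paper: test against an $(m-k-1)$-form $\eta \in C_0^1(\M)$, use the pairing identity $\langle \eta, j_\omega \bs{\vec{X}} \rangle = \langle \omega \wedge \eta, \bs{\vec{X}} \rangle$ together with the graded Leibniz rule $\ed(\omega \wedge \eta) = \ed{\omega} \wedge \eta + (-1)^k \omega \wedge \ed{\eta}$, and apply Definition \ref{div_definition} to the $C_0^1$ test form $\omega \wedge \eta$. The signs and degree bookkeeping all check out, and your added remarks on the regularity of $j(\omega)\bs{\vec{Z}}$ and of $\omega \wedge \eta$ only make explicit what the paper leaves implicit.
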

\begin{proof}
For any differential $(m - k - 1)$-form $\eta$ of class $C_0^1(\M)$, we have
\begin{equation*}
\begin{split}
& \int\limits_{\mathcal{M}} \left( \left\langle \ed \eta, j(\omega) \bs{\vec{Z}} \right\rangle + \left\langle \eta, (-1)^k j(\ed{\omega}) \bs{\vec{Z}} + (-1)^k j(\omega) \dive{\bs{\vec{Z}}} \right\rangle \right) \, d \mu\\
& = \int\limits_{\mathcal{M}} \left( \langle \omega \wedge \ed \eta, \bs{\vec{Z}} \rangle + (-1)^k \langle \ed \omega \wedge \eta, \bs{\vec{Z}} \rangle + (-1)^k \langle \omega \wedge \eta, \dive{\bs{\vec{Z}}} \rangle  \right) \, d \mu\\
& = \int\limits_{\mathcal{M}} \left( (-1)^k \langle \ed (\omega \wedge \eta), \bs{\vec{Z}} \rangle + (-1)^k \langle \omega \wedge \eta, \dive{\bs{\vec{Z}}} \rangle  \right) \, d \mu = 0.
\end{split}
\end{equation*}
\end{proof}


\section{Divergence on submanifolds}

If $\M$ is a finite-dimensional (oriented) manifold endowed with a volume form $\Omega$, and $\Uu$ is its open submanifold, then it is natural to take $\Omega\restrictTo{\Uu}$ to be the volume form on $\Uu$. In this case one has the equality
\begin{equation}
\label{restriction_furmula}
\dive_\Uu(\bs{\vec{Z}}\restrictTo{\Uu}) = (\dive{\bs{\vec{Z}}})\restrictTo{\Uu},
\end{equation}
where $\dive_\Uu$ is the divergence on $\Uu$, induced by the volume form $\Omega\restrictTo{\Uu}$.

In the case when $\Uu$ is an open submanifold of a Banach manifold $\M$, the definition of divergence $\dive_\Uu$ of a multivector field is obtained from Definition \ref{div_definition} by replacing (\ref{div_definition_equation}) with
\begin{equation*}
\int\limits_\Uu \langle \omega, \bs{\vec{W}} \rangle \,d\mu = - \int\limits_\Uu \langle \ed{\omega}, \bs{\vec{Z}} \rangle \,d\mu,
\end{equation*}
which now has to hold for any differential form of class $C_0^1(\Uu)$. In this case formula (\ref{restriction_furmula}) also holds.

Let now $\M$ be an orientable manifold of finite dimension $n$; $\Su \subset \M$ an orientable embedded submanifold of dimension $m = n - p$, which is an elementary surface in the sense of Section \ref{associated_measures}; $\alpha$ an associated differential $p$-form of the embedding $\Su \subset \M$; $\bs{\vec{Y}} = \{\bs{Y_1}, \dots, \bs{Y_p}\}$ a commuting strictly transversal to $\Su$ system of vector fields of class $C_b^1(\Uu)$, where $\Uu$ is from the definition of an elementary surface.

For any $\varepsilon > 0$, there exists $\gamma = \gamma(\varepsilon) > 0$ such that for each $(\vec{t}, x) \in B_\gamma \times \Su_{-\varepsilon}$, one has $\Phi_{\vec{t}}x \in \Uu$ and $\langle \alpha, \bs{\vec{Y}} \rangle (\Phi_{\vec{t}}x) \neq 0$ (here $B_\gamma = \{\vec{t} \in \R^p \colon \|\vec{t}\| < \gamma\}$).

Without loss of generality we may assume $\langle \alpha, \bs{\vec{Y}} \rangle (\Phi_{\vec{t}}x) > 0$. One has that the map $q: \Phi_{B\gamma}\Su_{-\varepsilon} \ni \Phi_{\vec{t}}x \mapsto x \in \Su_{-\varepsilon}$ is continuously differentiable.

Let $\Omega_\Su$ be a volume form on $\Su$; $\bs{X}$ a vector field on $\Su$; $\widetilde{\bs{X}}$ the vector field on $\Phi_{B_\gamma} \Su_{-\varepsilon}$ which is $q$-connected with $\bs{X}$ ($q_*(\widetilde{\bs{X}}(\Phi_{\vec{t}}x)) = \bs{X}(x)$); $\widetilde{\Omega} = q^*\Omega$ a differential $p$-form on $\Phi_{B_\gamma} \Su_{-\varepsilon}$.

Suppose that $\bs{\vec{X}} = \bs{X_1} \wedge \dots \wedge \bs{X_m}$ is a nowhere-vanishing multivector field on $\Su_{-\varepsilon}$ and let $\beta = \widetilde{\Omega} \wedge \alpha$. Then for $x \in \Su_{-\varepsilon}$,
\begin{equation*}
\langle \beta, \widetilde{\bs{\vec{X}}} \wedge \bs{\vec{Y}} \rangle (x) = \widetilde{\Omega} (\widetilde{\bs{\vec{X}}})(x) \cdot \alpha(\bs{\vec{Y}})(x) = (\Omega(\bs{\vec{X}}) \cdot \alpha(\bs{\vec{Y}}))(x) > 0.
\end{equation*}
(here we used $(i_{\bs{X_j}} \alpha)(x) = 0$). Choosing a smaller $\gamma > 0$ if needed, we conclude that $\beta$ is a volume form on $\Phi_{B_\gamma} \Su_{-\varepsilon} \subset \M$.

\begin{proposition}
\label{restriction_first_result}
Let $\bs{Z}$ be a vector field of class $C_b^1$ on $\Su$ and let $\dive_\Su \bs{Z}$ be the divergence of $\bs{Z}$ with respect to the volume form $\Omega$ on $\Su$. Given $\varepsilon > 0$, let $\widetilde{\bs{Z}}$ be the vector field on $\Phi_{B_\gamma} \Su_{-\varepsilon}$ which is $q$-connected with $\bs{Z}$ and let $\dive{\widetilde{\bs{Z}}}$ be the divergence of $\widetilde{\bs{Z}}$ with respect to the volume form $\beta$. Suppose that $\alpha$ is closed. Then 
\begin{equation}
\label{restriction_first_result_formula}
\dive_\Su \bs{Z} = (\dive{\widetilde{\bs{Z}}})\restrictTo{\Su}.
\end{equation}
\end{proposition}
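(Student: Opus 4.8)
The plan is to run everything through Cartan's formula and reduce the statement to the vanishing, along $\Su$, of one explicit error term built from $\alpha$. First, since $\Omega$ is a top-degree form on $\Su$ and $\beta$ a top-degree form on $\Phi_{B_\gamma}\Su_{-\varepsilon}$, both are closed, so the defining relation $(\dive(\cdot))\cdot(\mathrm{vol}) = \ed\, i_{(\cdot)}(\mathrm{vol})$ simplifies to $(\dive_\Su \bs{Z})\,\Omega = \Lie_{\bs{Z}}\Omega$ and $(\dive\widetilde{\bs{Z}})\,\beta = \Lie_{\widetilde{\bs{Z}}}\beta$. Because $\beta$ is nowhere zero, it suffices to compute $\Lie_{\widetilde{\bs{Z}}}\beta$ and read off its coefficient relative to $\beta$ on $\Su$.

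Next I would expand with the Leibniz rule on $\beta = \widetilde{\Omega}\wedge\alpha$, where $\widetilde{\Omega} = q^*\Omega$, obtaining $\Lie_{\widetilde{\bs{Z}}}\beta = (\Lie_{\widetilde{\bs{Z}}}\widetilde{\Omega})\wedge\alpha + \widetilde{\Omega}\wedge\Lie_{\widetilde{\bs{Z}}}\alpha$. Since $\widetilde{\bs{Z}}$ is $q$-related to $\bs{Z}$, the Lie derivative commutes with the pullback: $\Lie_{\widetilde{\bs{Z}}}q^*\Omega = q^*\Lie_{\bs{Z}}\Omega = q^*((\dive_\Su\bs{Z})\,\Omega) = (q^*\dive_\Su\bs{Z})\,\widetilde{\Omega}$, so the first summand is $(q^*\dive_\Su\bs{Z})\,\beta$. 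Dividing by the volume form $\beta$ gives $\dive\widetilde{\bs{Z}} = q^*\dive_\Su\bs{Z} + (\widetilde{\Omega}\wedge\Lie_{\widetilde{\bs{Z}}}\alpha)/\beta$. On $\Su$ the map $q$ is the identity, so $q^*\dive_\Su\bs{Z}$ restricts to $\dive_\Su\bs{Z}$; the whole statement thus reduces to showing that the error form $\widetilde{\Omega}\wedge\Lie_{\widetilde{\bs{Z}}}\alpha$ vanishes on $\Su$.

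This vanishing is the heart of the matter, and it is exactly where closedness of $\alpha$ enters. Using $\ed\alpha = 0$ and Cartan's formula, $\Lie_{\widetilde{\bs{Z}}}\alpha = \ed\, i_{\widetilde{\bs{Z}}}\alpha$. The tempting shortcut — that $i_{\widetilde{\bs{Z}}}\alpha = 0$ on $\Su$ because there $\widetilde{\bs{Z}} = \bs{Z}$ lies in $T\Su$, the associated subspace of $\alpha$ — is by itself insufficient, since $\ed$ also feels the derivative of $i_{\widetilde{\bs{Z}}}\alpha$ transverse to $\Su$; this transverse derivative is the main obstacle. I would kill it by taking $\widetilde{\bs{Z}}$ to be the $q$-related lift tangent to the characteristic (associated) distribution $\ker\alpha$ at \emph{every} point, not merely along $\Su$. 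Such a lift exists, is unique, and is $C^1$ on $\Phi_{B_\gamma}\Su_{-\varepsilon}$: the condition $\langle\alpha,\bs{\vec{Y}}\rangle\neq 0$ guarantees that $\ker\alpha$ is a rank-$(n-p)$ distribution complementary to the flow directions $\mathrm{span}(\bs{Y_1},\dots,\bs{Y_p}) = \ker q_*$ there, so $q_*$ restricts to an isomorphism $\ker\alpha \to T\Su$ that pins $\widetilde{\bs{Z}}$ down. For this lift $i_{\widetilde{\bs{Z}}}\alpha\equiv 0$, whence $\Lie_{\widetilde{\bs{Z}}}\alpha = 0$ and the error form vanishes identically. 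Equivalently, in a flow box $(y,t)$ with $\bs{Y_k} = \partial_{t_k}$ one checks that $\ed\alpha = 0$ forces the $y$-derivatives of $\langle\alpha,\bs{\vec{Y}}\rangle$ along $\Su$ to be balanced by the transverse $t$-derivatives of the mixed coefficients of $\alpha$ (which themselves vanish on $\Su$), and these are precisely the terms produced by $\ed\, i_{\widetilde{\bs{Z}}}\alpha$; dropping closedness leaves a genuine correction proportional to the logarithmic derivative of $\langle\alpha,\bs{\vec{Y}}\rangle$ along $\Su$, so the hypothesis that $\alpha$ is closed is indispensable.

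Putting the pieces together yields $(\dive\widetilde{\bs{Z}})\restrictTo{\Su} = \dive_\Su\bs{Z}$, as claimed. Two bookkeeping facts keep the computation clean: $\ed\widetilde{\Omega} = q^*\ed\Omega = 0$ (since $\Omega$ is top-degree on $\Su$), which lets one move $\ed$ across $\widetilde{\Omega}$ freely in the alternative coordinate argument; and the pairing identity $\langle\beta,\widetilde{\bs{\vec{X}}}\wedge\bs{\vec{Y}}\rangle = \langle\Omega,\bs{\vec{X}}\rangle\,\langle\alpha,\bs{\vec{Y}}\rangle$ recorded just before the proposition, which guarantees $\beta$ is a genuine volume form and makes the division by $\beta$ — hence the coefficient extraction on $\Su$ — well defined.
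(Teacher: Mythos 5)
Your reduction is sound and, up to repackaging, it is the paper's own computation: the paper writes $\dive{\widetilde{\bs{Z}}}\cdot\beta = \ed\, i_{\widetilde{\bs{Z}}}(\widetilde{\Omega}\wedge\alpha)$ and jumps straight to $(\ed\, i_{\bs{Z}}\Omega)\wedge\alpha$ at points of $\Su$, which uses exactly your three ingredients --- $i_{\widetilde{\bs{Z}}}\widetilde{\Omega}=q^*(i_{\bs{Z}}\Omega)$, $\ed\alpha=0$, $\ed\widetilde{\Omega}=0$ --- plus the vanishing along $\Su$ of the cross term $\widetilde{\Omega}\wedge\ed\, i_{\widetilde{\bs{Z}}}\alpha$, on which the paper is silent. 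You are right that this cross term is the heart of the matter and that ``$i_{\widetilde{\bs{Z}}}\alpha=0$ on $\Su$'' alone does not kill it.

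Where your proposal goes astray is in how you dispose of that term. The paper's $\widetilde{\bs{Z}}$ is not yours to choose: it is the pushforward of $\bs{Z}$ by the flows $\Phi_{\vec{t}}$, tangent to each leaf $\Phi_{\vec{t}}\Su_{-\varepsilon}$ (this is stated and used in the proof of Theorem \ref{theorem2} and in the identity $\Lie_{\widetilde{\bs{X}}}\widetilde{\bs{\vec{Z}}}=\widetilde{\Lie_{\bs{X}}\bs{\vec{Z}}}$ in its corollary), not the lift tangent to $\ker\alpha$. For that field $[\bs{Y_j},\widetilde{\bs{Z}}]=0$, so with $\ed\alpha=0$ one gets $(\ed\, i_{\widetilde{\bs{Z}}}\alpha)(\bs{Y_1},\dots,\bs{Y_p})=(\Lie_{\widetilde{\bs{Z}}}\alpha)(\bs{\vec{Y}})=\widetilde{\bs{Z}}\langle\alpha,\bs{\vec{Y}}\rangle$, a tangential derivative that survives on $\Su$. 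Your ``equivalently, in a flow box'' paragraph asserts the opposite, and the error sits precisely in the parenthetical ``(which themselves vanish on $\Su$)'': the mixed coefficients vanish on $\Su$ but their transverse derivatives do not, and closedness trades those derivatives for $\bs{Z}\langle\alpha,\bs{\vec{Y}}\rangle$ rather than for zero. Concretely, in $\R^2$ with $\Su=\{t=0\}$ near $s=1$, take $\alpha=\ed(st)=t\,ds+s\,dt$ (closed and associated), $\bs{Y}=\partial_t$, $\Omega=ds$, $\bs{Z}=\partial_s$; then $\beta=s\,ds\wedge dt$, the leaf-tangent lift is $\widetilde{\bs{Z}}=\partial_s$, and $\dive_\Su\bs{Z}=0$ while $\dive{\widetilde{\bs{Z}}}=1/s$. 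Your main line of argument --- replace $\widetilde{\bs{Z}}$ by the $\ker\alpha$-tangent lift, for which $i_{\widetilde{\bs{Z}}}\alpha\equiv0$ --- does produce a correct identity (in the example that lift is $\partial_s-(t/s)\partial_t$, whose divergence is indeed $0$), but it proves a different proposition from the one stated, and it also needs a hypothesis you gloss over: $\langle\alpha,\bs{\vec{Y}}\rangle\neq0$ gives $\ker\alpha\cap\mathrm{span}(\bs{Y_1},\dots,\bs{Y_p})=0$, but for codimension $p\geq2$ it does not force $\ker\alpha$ to have codimension exactly $p$ off $\Su$, where $\alpha$ need not be decomposable, so the lift you rely on need not exist or be unique.
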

\begin{proof}
Take $x \in \Su_{-\varepsilon}$. The statement follows from the following equalities
\begin{equation*}
(\dive{\widetilde{\bs{Z}}} \cdot \beta)(x) = (\ed i_{\widetilde{\bs{Z}}}(\widetilde{\Omega} \wedge \alpha))(x) = (\ed i_{\bs{Z}} \Omega)(x) \wedge \alpha(x) = (\dive_\Su \bs{Z} \cdot \beta)(x).
\end{equation*}
\end{proof}

\begin{corollary}
In the assumptions of Proposition \ref{restriction_first_result}, suppose that $\bs{\vec{Z}}$ is a multivector field of class $C_b^1$ on $\Su$; $\widetilde{\bs{\vec{Z}}}$ is the $q$-connected with $\bs{\vec{Z}}$ multivector field on $\mathcal{V} = \Phi_{B_\gamma} \Su_{-\varepsilon}$; $\dive_\Su$ and $\dive$ are the divergence operators on $(\Su, \Omega)$ and $(\mathcal{V}, \beta)$, respectively. Then
\begin{equation}
\label{restriction_formula_multivectors}
\dive_\Su \bs{\vec{Z}} = (\dive{\widetilde{\bs{\vec{Z}}}})\restrictTo{S}.
\end{equation}
\end{corollary}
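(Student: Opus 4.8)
The plan is to reduce to decomposable fields and then induct on the degree of $\bs{\vec{Z}}$, using the Leibniz-type identity (\ref{inductive_formula1}) as the engine that carries the base case upward; the base case $k = 1$ is precisely Proposition \ref{restriction_first_result}. Since $\dive_\Su$, $\dive$, and the lift $\bs{\vec{Z}} \mapsto \widetilde{\bs{\vec{Z}}}$ are all linear, it suffices to establish (\ref{restriction_formula_multivectors}) for $\bs{\vec{Z}} = \bs{Z_1} \wedge \dots \wedge \bs{Z_k}$ with each $\bs{Z_i} \in C_b^1(\Su)$.

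Before the induction I would record the structural properties of the lift. The $q$-connected lift of a field on $\Su_{-\varepsilon}$ is the one invariant under the flow, $\widetilde{\bs{\vec{Z}}}(\Phi_{\vec{t}}x) = (\Phi_{\vec{t}})_* \bs{\vec{Z}}(x)$; equivalently it is characterised by $q_* \widetilde{\bs{\vec{Z}}} = \bs{\vec{Z}}$ together with $\Lie_{\bs{Y_i}} \widetilde{\bs{\vec{Z}}} = 0$ for every $i$. Because the fields $\bs{Y_i}$ commute, the map $(x, \vec{t}) \mapsto \Phi_{\vec{t}}x$ identifies $\mathcal{V}$ with the product $\Su_{-\varepsilon} \times B_\gamma$, under which $q$ becomes projection onto the first factor and each lift becomes the trivial extension that is constant in $\vec{t}$ and tangent to the $\Su_{-\varepsilon}$-slices. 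From this I read off the three facts I will use repeatedly: the lift respects exterior products, $\widetilde{\bs{X} \wedge \bs{\vec{Z}}} = \widetilde{\bs{X}} \wedge \widetilde{\bs{\vec{Z}}}$; its restriction to $\Su$ recovers the original field, $\widetilde{\bs{\vec{Z}}}\restrictTo{\Su} = \bs{\vec{Z}}$; and it commutes with Lie derivation, $\Lie_{\widetilde{\bs{X}}} \widetilde{\bs{\vec{Z}}} = \widetilde{\Lie_{\bs{X}} \bs{\vec{Z}}}$.

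For the inductive step I would split off one factor, writing $\bs{\vec{Z}} = \bs{X} \wedge \bs{\vec{Z}}_0$ with $\bs{X} = \bs{Z_1}$ and $\bs{\vec{Z}}_0$ of degree $k - 1$, and apply (\ref{inductive_formula1}) on $(\mathcal{V}, \beta)$ to $\widetilde{\bs{X}} \wedge \widetilde{\bs{\vec{Z}}}_0 = \widetilde{\bs{\vec{Z}}}$:
\begin{equation*}
\dive \bigl(\widetilde{\bs{X}} \wedge \widetilde{\bs{\vec{Z}}}_0\bigr) = \dive{\widetilde{\bs{X}}} \cdot \widetilde{\bs{\vec{Z}}}_0 - \widetilde{\bs{X}} \wedge \dive \widetilde{\bs{\vec{Z}}}_0 + \Lie_{\widetilde{\bs{X}}} \widetilde{\bs{\vec{Z}}}_0 .
\end{equation*}
Restricting to $\Su$ and comparing with (\ref{inductive_formula1}) written on $(\Su, \Omega)$, the three summands should match one at a time: the first via Proposition \ref{restriction_first_result}, $(\dive{\widetilde{\bs{X}}})\restrictTo{\Su} = \dive_\Su \bs{X}$, together with $\widetilde{\bs{\vec{Z}}}_0\restrictTo{\Su} = \bs{\vec{Z}}_0$; the second via the inductive hypothesis $(\dive \widetilde{\bs{\vec{Z}}}_0)\restrictTo{\Su} = \dive_\Su \bs{\vec{Z}}_0$ and $\widetilde{\bs{X}}\restrictTo{\Su} = \bs{X}$; and the third via the Lie-derivative identity restricted to $\Su$.

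I expect the Lie-derivative term to be the only genuine obstacle. What is needed is $(\Lie_{\widetilde{\bs{X}}} \widetilde{\bs{\vec{Z}}}_0)\restrictTo{\Su} = \Lie_{\bs{X}} \bs{\vec{Z}}_0$ as an element of $\bigwedge^{k - 1}(T_x\Su)$, and the delicate point is that a Lie derivative computed on $\mathcal{V}$ a priori senses the behaviour of the fields off $\Su$ and could pick up components transversal to $\Su$. This is exactly where the commutativity of $\bs{\vec{Y}}$ and the flow-invariance of the lifts are essential: in the product coordinates both $\widetilde{\bs{X}}$ and $\widetilde{\bs{\vec{Z}}}_0$ are constant in $\vec{t}$ and tangent to the slices, so their Lie derivative is again the trivial extension of the intrinsic $\Lie_{\bs{X}} \bs{\vec{Z}}_0$, hence tangent to $\Su$ and equal to it upon restriction. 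Making this flow-invariance precise, and checking that the product identification is compatible with the $C_b^1$ framework of Section \ref{associated_measures}, is the part that requires care; once it is in place, summing the three matched terms gives (\ref{restriction_formula_multivectors}).
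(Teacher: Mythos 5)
Your proof is correct and follows essentially the same route as the paper: induction on the degree with Proposition \ref{restriction_first_result} as the base case, the recurrence (\ref{inductive_formula1}) applied on both $(\Su, \Omega)$ and $(\mathcal{V}, \beta)$, and the compatibility of the lift with wedge products and Lie derivation. The paper simply asserts the identities $\widetilde{\bs{X} \wedge \bs{\vec{Z}}} = \widetilde{\bs{X}} \wedge \widetilde{\bs{\vec{Z}}}$ and $\widetilde{\Lie_{\bs{X}} \bs{\vec{Z}}} = \Lie_{\widetilde{\bs{X}}} \widetilde{\bs{\vec{Z}}}$ without the justification via the product identification that you supply, but the argument is the same.
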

\begin{proof}
Formula (\ref{restriction_formula_multivectors}) follows by induction from formula (\ref{restriction_first_result_formula}); recurrent formula (\ref{inductive_formula1}), applied to $\dive_\Su (\bs{X} \wedge \bs{\vec{Z}})$ and $\dive (\widetilde{\bs{X}} \wedge \widetilde{\bs{\vec{Z}}})$; equalities $\widetilde{\bs{X} \wedge \bs{\vec{Z}}} = \widetilde{\bs{X}} \wedge \widetilde{\bs{\vec{Z}}}$ and $\widetilde{\Lie_{\bs{X}} \bs{\vec{Z}}} = \Lie_{\widetilde{\bs{X}}} \widetilde{\bs{\vec{Z}}}$.
\end{proof}

Throughout the remainder of this article, $\M$ is a Banach manifold with a uniform atlas, modeled on a space $E$, where $E$ satisfies the assumptions of Theorem \ref{uniqueness_theorem}. Suppose that $\Su$ is an elementary surface in $\M$ of codimension $m$; $\mu$ is a (non-negative) Radon measure on $\M$ and the corresponding measure $\sigma = \sigma_{\bs{\vec{Y}}}$ on the surface $\Su_{-\varepsilon} \subset \Su$ is constructed as described in Section \ref{associated_measures}.

It follows from general theory of differential equations in Banach spaces that there exists $\gamma = \gamma(\varepsilon) > 0$ for which one has a well-defined map $q: \Phi_{B_\gamma} \Su_{-\varepsilon} \ni \Phi_{\vec{t}}x \mapsto x \in \Su_{-\varepsilon}$ of class $C_b^1$. Let $\bs{Z}$ be a vector field of class $C_b^1$ on $\Su$. Then the $q$-connected with $\bs{Z}$ vector field $\widetilde{\bs{Z}}$ is defined on $\mathcal{V} = \Phi_{B_\gamma} \Su_{-\varepsilon}$ and is also of class $C_b^1$.

\begin{theorem}
\label{theorem2}
Suppose that $\widetilde{\bs{Z}}$ has a divergence $\dive{\widetilde{\bs{Z}}} \in L_\infty(\mathcal{V}, \mu)$. Then $\bs{Z}$ has a divergence $\dive_\Su \bs{Z} \in L_\infty(\Su, \sigma)$ and for any bounded Borel function $u: \Su_{-\varepsilon} \to \R$, the following equality holds
\begin{equation}
\label{theorem2_eq}
\int\limits_{\Su_{-\varepsilon}} u \dive_\Su \bs{Z} \,d\sigma = \lim_{r \to 0} \frac{1}{\lambda_m(B_r)} \int\limits_{\Phi_{B_r}\Su_{-\varepsilon}} \widehat{u} \dive{\widetilde{\bs{Z}}} \,d\mu
\end{equation}
(here and henceforth $\widehat{u}(\Phi_{\vec{t}}x) = u(x)$ for $(\vec{t}, x) \in B_\gamma \times \Su_{-\varepsilon}$).
\end{theorem}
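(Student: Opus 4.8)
The plan is to transport the defining relation (\ref{div_definition_equation}) for $\dive\widetilde{\bs{Z}}$ on $\mathcal{V}$ down to $\Su_{-\varepsilon}$ by testing it against functions that are concentrated near the surface and constant along the flow, and then to let $r \to 0$ via Lemma \ref{lemma3}. First I pass to flow coordinates: since $\bs{Y_1}, \dots, \bs{Y_m}$ commute, the map $(\vec{t}, x) \mapsto \Phi_{\vec{t}}^{\bs{\vec{Y}}} x$ is a $C_b^1$-diffeomorphism of $B_\gamma \times \Su_{-\varepsilon}$ onto $\mathcal{V}$ under which $\bs{Y_j} = \partial_{t_j}$. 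The $q$-connected field is the flow-invariant lift $\widetilde{\bs{Z}}(\Phi_{\vec{t}} x) = (\Phi_{\vec{t}}^{\bs{\vec{Y}}})_* \bs{Z}(x)$, so in these coordinates it equals $\bs{Z}$ acting on the $x$-variables alone (no $\partial_{t_j}$-component, independent of $\vec{t}$), whence $\Lie_{\bs{Y_j}} \widetilde{\bs{Z}} = 0$. Consequently, for $u \in C_0^1(\Su_{-\varepsilon})$ and $\chi \in C_0^1(B_\gamma)$, the product $\psi(\Phi_{\vec{t}} x) = \chi(\vec{t}) u(x)$ lies in $C_0^1(\mathcal{V})$ and satisfies $\widetilde{\bs{Z}}(\psi)(\Phi_{\vec{t}} x) = \chi(\vec{t}) \bs{Z}(u)(x)$.

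Feeding $\psi$ into (\ref{div_definition_equation}) for $\widetilde{\bs{Z}}$ (a vector field, so its divergence is tested against $0$-forms) gives $\int_{\mathcal{V}} \chi(\vec{t}) u(x) \dive \widetilde{\bs{Z}} \, d\mu = -\int_{\mathcal{V}} \chi(\vec{t}) \bs{Z}(u)(x) \, d\mu$. Taking $\chi = \chi_r$ approximating $\lambda_m(B_r)^{-1} \mathbf{1}_{B_r}$ and collapsing the approximation (dominated convergence, using $\dive \widetilde{\bs{Z}} \in L_\infty$ and $\mu$ Radon) turns this, for each $r$, into the identity $\frac{1}{\lambda_m(B_r)} \int_{\Phi_{B_r} \Su_{-\varepsilon}} \widehat{u} \dive \widetilde{\bs{Z}} \, d\mu = -\frac{1}{\lambda_m(B_r)} \int_{\Phi_{B_r} \Su_{-\varepsilon}} \widehat{\bs{Z}(u)} \, d\mu$. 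Since $\bs{Z}(u) \in C_b(\Su_{-\varepsilon})$, Lemma \ref{lemma3} shows the right-hand side converges to $-\int_{\Su_{-\varepsilon}} \bs{Z}(u) \, d\sigma$ as $r \to 0$; hence so does the left-hand side, giving, for every $u \in C_0^1(\Su_{-\varepsilon})$,
\begin{equation*}
\lim_{r \to 0} \frac{1}{\lambda_m(B_r)} \int\limits_{\Phi_{B_r} \Su_{-\varepsilon}} \widehat{u} \dive \widetilde{\bs{Z}} \, d\mu = - \int\limits_{\Su_{-\varepsilon}} \langle \ed u, \bs{Z} \rangle \, d\sigma. \tag{$\ast$}
\end{equation*}

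To extract the divergence, I read ($\ast$) as a bound on the linear functional $u \mapsto -\int_{\Su_{-\varepsilon}} \langle \ed u, \bs{Z} \rangle \, d\sigma$ on $C_0^1(\Su_{-\varepsilon})$. Estimating the left-hand side of ($\ast$) by $\|\dive \widetilde{\bs{Z}}\|_\infty \cdot \lambda_m(B_r)^{-1} \int_{\Phi_{B_r} \Su_{-\varepsilon}} |\widehat{u}| \, d\mu$ and applying Lemma \ref{lemma3} to $|u| \in C_b(\Su_{-\varepsilon})$ yields $\left| \int_{\Su_{-\varepsilon}} \langle \ed u, \bs{Z} \rangle \, d\sigma \right| \le \|\dive \widetilde{\bs{Z}}\|_\infty \|u\|_{L_1(\sigma)}$. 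Thus the functional is $L_1(\sigma)$-continuous; since $\sigma$ is Radon (Lemma \ref{lemma2}) and, as in the proof of Theorem \ref{uniqueness_theorem}, $C_0^1(\Su_{-\varepsilon})$ is dense in $L_1(\Su_{-\varepsilon}, \sigma)$, the functional is represented through $L_1$--$L_\infty$ duality by a unique $\bs{W} \in L_\infty(\Su_{-\varepsilon}, \sigma)$ with $\|\bs{W}\|_\infty \le \|\dive \widetilde{\bs{Z}}\|_\infty$. By construction $\int_{\Su_{-\varepsilon}} u \bs{W} \, d\sigma = -\int_{\Su_{-\varepsilon}} \langle \ed u, \bs{Z} \rangle \, d\sigma$ for all $u \in C_0^1$, i.e. $\bs{W} = \dive_\Su \bs{Z}$ (uniqueness being Theorem \ref{uniqueness_theorem} applied on $\Su$); this is the first assertion and, combined with ($\ast$), yields (\ref{theorem2_eq}) for $u \in C_0^1$.

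It remains to promote (\ref{theorem2_eq}) from $C_0^1$ to arbitrary bounded Borel $u$, and this is where I expect the main obstacle. Writing $\tau_r(A) = \lambda_m(B_r)^{-1} \int_{\Phi_{B_r} A} \dive \widetilde{\bs{Z}} \, d\mu$ and $\nu = \bs{W} \sigma$ for $A \in \B(\Su_{-\varepsilon})$, the two sides of (\ref{theorem2_eq}) are $\int u \, d\tau_r$ and $\int u \, d\nu$, and so far I only have $\int u \, d\tau_r \to \int u \, d\nu$ for $u \in C_0^1$ (a weak-type convergence), together with the domination $|\tau_r| \le \|\dive \widetilde{\bs{Z}}\|_\infty \sigma_r$. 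The plan is to upgrade this to setwise convergence $\tau_r(A) \to \nu(A)$ for every $A \in \B(\Su_{-\varepsilon})$: decomposing $\dive \widetilde{\bs{Z}} = g^+ - g^-$ produces non-negative Radon measures $g^\pm \mu$ dominated by $\|\dive \widetilde{\bs{Z}}\|_\infty \mu$, and the crux is to show that their surface measures exist and are dominated by $\|\dive \widetilde{\bs{Z}}\|_\infty \sigma$ --- an $L_\infty$-refinement of Lemma \ref{lemma3}, whose difficulty is precisely that smallness in $L_1(\mu)$ does not control a surface integral supported on a $\mu$-null set. Granting this, $\tau_r \to \nu$ setwise with $\sup_r |\tau_r|(\Su_{-\varepsilon}) < \infty$, and since every bounded Borel $u$ is a uniform limit of simple functions, $\int u \, d\tau_r \to \int u \, d\nu$ follows, establishing (\ref{theorem2_eq}) in full. (One works on sets of finite $\sigma$-measure, or with $u$ of bounded support, so that all integrals converge.)
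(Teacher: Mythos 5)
Your Step 1 (the identity $(\ast)$, i.e.\ the paper's (\ref{theorem2_eq3})) is sound and is essentially the paper's argument: both exploit that $\widetilde{\bs{Z}}$ is tangent to the leaves $\Phi_{\vec{t}}\Su_{-\varepsilon}$ so that a cutoff in $\vec{t}$ is not differentiated, and both then invoke Lemma \ref{lemma3} for $\widehat{\bs{Z}u}\in C_b$. Your second step, producing $\dive_\Su\bs{Z}\in L_\infty(\sigma)$ by reading $(\ast)$ as an $L_1(\sigma)$-bounded functional and using $L_1$--$L_\infty$ duality, is a legitimate and somewhat slicker alternative to the paper, which instead gets the density as the Radon--Nikod\'ym derivative $g_\varepsilon = d\tau/d\sigma$ of the setwise limit measure $\tau$ (modulo the usual caveat that $(L_1)^*=L_\infty$ needs $\sigma$ to be $\sigma$-finite or decomposable, and that you must still patch the $g_\varepsilon$ over $\varepsilon$ to get a single field on $\Su$).

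The genuine gap is the final third. The theorem asserts (\ref{theorem2_eq}) for \emph{every bounded Borel} $u$, and your argument for that case ends with ``Granting this'': you correctly identify that what is needed is the setwise convergence $\tau_r(A)\to\tau(A)$ for all $A\in\B(\Su_{-\varepsilon})$ (equivalently, existence of the surface measure attached to the signed measure $\dive\widetilde{\bs{Z}}\cdot\mu$, dominated by $\Vert\dive\widetilde{\bs{Z}}\Vert_{L_\infty(\mu)}\sigma$), but you do not prove it, and you even note why it does not follow from soft estimates. This is precisely what Steps 2--4 of the paper supply, and it is the bulk of the proof. The mechanism is a three-stage bootstrap from the class where convergence is already known: first to basic open sets $U=U_\alpha$ by approximating $j_U$ from below by $v_n=h_n\circ u_\alpha\in C_0^1$, where Nikod\'ym's theorem applied to the convergent family $\{\sigma_r\}$ yields the \emph{uniform-in-$r$} smallness $\sup_{r}\sigma_r(U\setminus V_n)\to 0$, hence (via $|\tau_r(B)|\le\Vert\dive\widetilde{\bs{Z}}\Vert_{L_\infty(\mu)}\,\sigma_r(B)$) a Cauchy-in-$r$ estimate for $\tau_r(U)$; then to compact sets via $U_n\searrow K$; then to arbitrary Borel sets via inner regularity of $\sigma$. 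Only after this does one pass to bounded Borel $u$ by uniform approximation with simple functions. Without this uniform equicontinuity input from Nikod\'ym's theorem, your plan of decomposing $\dive\widetilde{\bs{Z}}=g^+-g^-$ and hoping the surface measures of $g^\pm\mu$ exist remains an unproven assertion, so the proof as written does not establish the stated theorem.
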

\begin{proof}
\textbf{Step 1}. Let $u \in C_0^1(\Su)$. Then $u \in C_0^1(\Su_{-\varepsilon})$ for some $\varepsilon > 0$. We shall prove that for any $r \in (0, \gamma)$, the following  holds
\begin{equation}
\label{theorem2_eq1}
\int\limits_{\Phi_{B_r}\Su_{-\varepsilon}} \widehat{u} \dive{\widetilde{\bs{Z}}} \,d\mu = - \int\limits_{\Phi_{B_r}\Su_{-\varepsilon}} \widetilde{\bs{Z}} \widehat{u} \,d\mu.
\end{equation}

The function $\widehat{u}$ is not of class $C_0^1(\mathcal{V})$. We will use the fact that $\widetilde{\bs{Z}}$ is tangent to each surface $\Phi_{\vec{t}}\Su_{-\varepsilon}$ for fixed $\vec{t} \in B_\gamma$.

Let us define a sequence of functions $\varphi_n \in C[0, r]$ for $n > 3$ as follows
\[\arraycolsep=0.5pt\def\arraystretch{1.5}
\varphi_n(s) = \displaystyle\left\{ \begin{array}{lll}
&0 &\text{ if } s \in \left[0, \frac{n - 3}{n}r \right] \cup \left[\frac{n - 1}{n}r, r \right], \\

&-\frac{n^2}{r^2}s + \frac{n(n - 3)}{r} &\text{ if } s \in \left[\frac{n - 3}{n}r, \frac{n - 2}{n}r \right], \\

&\frac{n^2}{r^2}s - \frac{n(n - 1)}{r} &\text{ if } s \in \left[\frac{n - 2}{n}r, \frac{n - 1}{n}r \right].
\end{array} \right.
\]
Then for the sequence of functions $h_n(s) = 1 + \int\limits_0^s \varphi_n(s) \,ds$, one has that the functions $u_n(\Phi_{\vec{t}}x) = h_n(\|\vec{t}\|) \cdot u(x)$ coincide with $\widehat{u}(\Phi_{\vec{t}}x)$ for $\|\vec{t}\| \leq \frac{n - 3}{n}r$, and $u_n \in C_0^1(\Phi_{B_r}\Su_\varepsilon)$.

Hence, we have
\begin{equation}
\label{theorem2_eq2}
\int\limits_{\Phi_{B_r}\Su_{-\varepsilon}} u_n \dive{\widetilde{\bs{Z}}} \,d\mu = - \int\limits_{\Phi_{B_r}\Su_{-\varepsilon}} \widetilde{\bs{Z}} u_n \,d\mu
\end{equation}
and
\begin{equation*}
(\widetilde{\bs{Z}} u_n)(\Phi_{\vec{t}}x) = h_n(\|\vec{t}\|) \cdot (\widetilde{\bs{Z}} \widehat{u})(\Phi_{\vec{t}}x) \text{ for } x \in \Su_{-\varepsilon}.
\end{equation*}
Passing in (\ref{theorem2_eq2}) to the limit as $n \to \infty$ we obtain (\ref{theorem2_eq1}).

Since the function $\widetilde{\bs{Z}} \widehat{u} \in C_b(\Phi_{B_\gamma}\Su_{-\varepsilon})$, Lemma \ref{lemma3} implies existence of the limit 
\begin{equation*}
\lim_{r \to 0} \frac{1}{\lambda_m(B_r)} \int\limits_{\Phi_{B_r}\Su_{-\varepsilon}} \widetilde{\bs{Z}} \widehat{u} \,d\mu = \int\limits_{\Su_{-\varepsilon}} \bs{Z} u \,d\sigma.
\end{equation*}
Therefore, using (\ref{theorem2_eq1}), we obtain the following equality
\begin{equation}
\label{theorem2_eq3}
\lim_{r \to 0} \frac{1}{\lambda_m(B_r)} \int\limits_{\Phi_{B_r} \Su_{-\varepsilon}} \widehat{u} \dive \widetilde{\bs{Z}} \,d\mu = - \int\limits_{\Su_{-\varepsilon}} \bs{Z} u \,d\sigma,
\end{equation}
that holds for any function $u \in C_0^1(\Su_{-\varepsilon})$.

\textbf{Step 2}. The model space $E_1$ of the manifold $\Su$ has a finite codimension in $E$ and therefore also admits a function of class $C^1(E_1)$ with a bounded non-empty support. The argument used in the proof of Theorem \ref{uniqueness_theorem} also
proves that the sets $U_\alpha = \{x \colon u_\alpha(x) > 0\}$, where $\{u_\alpha\} = C_0^1(\Su_{-\varepsilon})$, constitute a base of the topology of $\Su_{-\varepsilon}$.

Let $u \in \{u_\alpha\}$; $U = \{x \colon u(x) > 0\}$ is one of the sets of this base. Taking a sequence of smooth functions $h_n \in C^1(\R)$ that approximate the Heaviside step function $\chi$, we construct a sequence of functions $v_n = h_n \circ u$ for which $\{x \colon v_n(x) > 0\} = U$; $v_n \nearrow j_U = \chi \circ u$ (where $j_A$ denotes the indicator function of a set $A$) and $V_n = \{x \colon v_n(x) = 1\} \nearrow U$.

Nikod\'ym's theorem implies the uniform in $r \in (0, \gamma)$ convergence
\begin{equation*}
\sigma_r(U \setminus V_n) = \frac{1}{\lambda_m(B_r)} \mu(\Phi_{B_r}(U \setminus V_n)) \to 0, ~n \to \infty.
\end{equation*}
Since $\dive{\widetilde{\bs{Z}}} \in L_\infty(\mu)$, one also has a uniform in $r \in (0, \gamma)$ convergence
\begin{equation*}
\frac{1}{\lambda_m(B_r)} \int\limits_{\Phi_{B_r}\Su_{-\varepsilon}} \left|(\widehat{v_n} - \widehat{j_U}) \dive{\widetilde{\bs{Z}}} \right| \,d\mu \to 0, ~n \to \infty.
\end{equation*}

This uniform convergence and the convergence (\ref{theorem2_eq3}), together with the inequality
\begin{equation*}
\begin{split}
&\left| \frac{1}{\lambda_m(B_r)} \int\limits_{\Phi_{B_r}U} \dive{\widetilde{\bs{Z}}} \,d\mu - \frac{1}{\lambda_m(B_s)} \int\limits_{\Phi_{B_s}U} \dive{\widetilde{\bs{Z}}} \,d\mu \right| \\
& \leq \frac{1}{\lambda_m(B_r)} \int\limits_{\Phi_{B_r}\Su_{-\varepsilon}} \left|(\widehat{v_n} - \widehat{j_U}) \dive{\widetilde{\bs{Z}}} \right| \,d\mu \\
& + \frac{1}{\lambda_m(B_s)} \int\limits_{\Phi_{B_s}\Su_{-\varepsilon}} \left|(\widehat{v_n} - \widehat{j_U}) \dive{\widetilde{\bs{Z}}} \right| \,d\mu \\
& + \left| \frac{1}{\lambda_m(B_r)} \int\limits_{\Phi_{B_r}\Su_{-\varepsilon}} \widehat{v_n} \cdot \dive{\widetilde{\bs{Z}}} \,d\mu - \frac{1}{\lambda_m(B_s)} \int\limits_{\Phi_{B_s}\Su_{-\varepsilon}} \widehat{v_n} \cdot \dive{\widetilde{\bs{Z}}} \,d\mu \right|
\end{split}
\end{equation*}
allow us to conclude that the following limit exists
\begin{equation}
\label{theorem2_eq4}
\lim_{r \to 0} \frac{1}{\lambda_m(B_r)} \int\limits_{\Phi_{B_r}U} \dive{\widetilde{\bs{Z}}} \,d\mu.
\end{equation}

\textbf{Step 3}. Let $K$ be a compact subset of $\Su_{-\varepsilon}$. Then there is a sequence of sets $U_n \in \{U_\alpha\}$ such that $U_n \searrow K$.

Again, using Nikod\'ym's theorem and the fact that $\dive{\widetilde{\bs{Z}}} \in L_\infty(\mu)$, we obtain a uniform in $r \in (0, \gamma)$ convergence
\begin{equation*}
\lim_{n \to \infty} \frac{1}{\lambda_m(B_r)} \int\limits_{\Phi_{B_r}(U_n \setminus K)} \left| \dive{\widetilde{\bs{Z}}} \right| \,d\mu = 0.
\end{equation*}

From this uniform convergence and the convergence (\ref{theorem2_eq4}), together with the next inequality (here $r, s \in (0, \gamma)$)
\begin{equation*}
\begin{split}
&\left| \frac{1}{\lambda_m(B_r)} \int\limits_{\Phi_{B_r}K} \dive{\widetilde{\bs{Z}}} \,d\mu - \frac{1}{\lambda_m(B_s)} \int\limits_{\Phi_{B_s}K} \dive{\widetilde{\bs{Z}}} \,d\mu \right| \\
& \leq \frac{1}{\lambda_m(B_r)} \int\limits_{\Phi_{B_r}(U_n \setminus K)} \left|\dive{\widetilde{\bs{Z}}} \right| \,d\mu + \frac{1}{\lambda_m(B_s)} \int\limits_{\Phi_{B_s}(U_n \setminus K)} \left|\dive{\widetilde{\bs{Z}}} \right| \,d\mu \\
& + \left| \frac{1}{\lambda_m(B_r)} \int\limits_{\Phi_{B_r}U_n} \dive{\widetilde{\bs{Z}}} \,d\mu - \frac{1}{\lambda_m(B_s)} \int\limits_{\Phi_{B_s}U_n} \dive{\widetilde{\bs{Z}}} \,d\mu \right|
\end{split}
\end{equation*}
we conclude that the following limit exists
\begin{equation}
\label{theorem2_eq5}
\lim_{r \to 0} \frac{1}{\lambda_m(B_r)} \int\limits_{\Phi_{B_r}K} \dive{\widetilde{\bs{Z}}} \,d\mu.
\end{equation}

\textbf{Step 4}. Let $A$ be an arbitrary Borel subset of $\Su_{-\varepsilon}$. Let $K_n$ be a non-decreasing sequence of compact sets satisfying $\sigma(A \setminus K_n) < \frac{1}{n}$. Then for $C = \bigcap\limits_{n = 1}^\infty (A \setminus K_n)$, one has $\sigma(C) = 0$, and therefore
\begin{equation}
\label{theorem2_eq6}
\lim_{r \to 0} \frac{1}{\lambda_m(B_r)} \int\limits_{\Phi_{B_r}C} \left| \dive{\widetilde{\bs{Z}}} \right| \,d\mu = 0.
\end{equation}

Analogously to Step 3, we first obtain a uniform in $r \in (0, \gamma)$ convergence
\begin{equation*}
\lim_{n \to \infty} \frac{1}{\lambda_m(B_r)} \int\limits_{\Phi_{B_r}((A \setminus C) \setminus K_n)} \left| \dive{\widetilde{\bs{Z}}} \right| \,d\mu = 0,
\end{equation*}
and then use (\ref{theorem2_eq6}) and the existence of the limit (\ref{theorem2_eq5}) in order to conclude that the following limit exists
\begin{equation}
\label{theorem2_eq7}
\lim_{r \to 0} \frac{1}{\lambda_m(B_r)} \int\limits_{\Phi_{B_r}A} \dive{\widetilde{\bs{Z}}} \,d\mu.
\end{equation}

Let now $\tau_r$ denote the measure on $\B(\Su_{-\varepsilon})$ defined by
\begin{equation*}
\tau_r(A) \colonequals \frac{1}{\lambda_m(B_r)} \int\limits_{\Phi_{B_r}A} \dive{\widetilde{\bs{Z}}} \,d\mu.
\end{equation*} 
Existence of the limit (\ref{theorem2_eq7}) means that for each Borel set $A \in \B(\Su_{-\varepsilon})$, there exists a limit $\lim\limits_{r \to 0} \tau_r(A) \equalscolon \tau(A)$. Since $\dive{\widetilde{\bs{Z}}} \in L_\infty(\mu)$, the measure $\tau$ is absolutely continuous with respect to $\sigma$ and, additionally, $g_\varepsilon = \frac{d \tau}{d\sigma} \in L_\infty(\Su_{-\varepsilon}, \sigma)$ and
\begin{equation}
\label{estimate}
\|g_\varepsilon\|_{L_\infty(\sigma)} \leq \|\dive{\widetilde{\bs{Z}}}\|_{L_\infty(\mu)}.
\end{equation}

For any bounded Borel function $u$ on $\Su_{-\varepsilon}$, one has
\begin{equation}
\label{theorem2_eq8}
\lim\limits_{r \to 0} \frac{1}{\lambda_m(B_r)} \int\limits_{\Phi_{B_r}\Su_{-\varepsilon}} \widehat{u} \dive{\widetilde{\bs{Z}}} \,d\mu = \lim\limits_{r \to 0} \int\limits_{\Su_{-\varepsilon}} u \,d\tau_r = \int\limits_{\Su_{-\varepsilon}} u \cdot g_\varepsilon \,d\sigma.
\end{equation}

Since (\ref{theorem2_eq8}) holds for any bounded Borel function on $\Su_{-\varepsilon}$, it follows that $g_{\varepsilon_1} = g_{\varepsilon_2}\restrictTo{\Su_{-\varepsilon_1}}$ for $\varepsilon_2 \in (0, \varepsilon_1)$ and hence, there exists a Borel function $g$ defined on the whole of $\Su$, such that $g_\varepsilon = g\restrictTo{\Su_{-\varepsilon}}$ for any $\varepsilon > 0$; moreover, by (\ref{estimate}), $g \in L_\infty(\Su, \sigma)$.

In particular, by (\ref{theorem2_eq3}), for any function $u \in C_0^1(\Su)$, one has
\begin{equation*}
-\int\limits_\Su \bs{Z} u \,d\sigma = \int\limits_\Su u \cdot g \,d\sigma.
\end{equation*}
Therefore, there exists $\dive_\Su \bs{Z} = g$ on $\Su$; $\dive_\Su \bs{Z} \in L_\infty(\sigma)$, and for any bounded Borel function $u$ defined on $\Su_{-\varepsilon}$ for some $\varepsilon > 0$, equality (\ref{theorem2_eq}) holds. This completes the proof of the theorem.
\end{proof}

\begin{remark}
Analogously to Lemma \ref{lemma3}, one can prove that
\begin{equation*}
\int\limits_{\Su_{-\varepsilon}} u \dive_\Su \bs{Z} \,d\sigma = \lim\limits_{r \to 0} \frac{1}{\lambda_m(B_r)} \int\limits_{\Phi_{B_r} \Su_{-\varepsilon}} u \dive{\widetilde{\bs{Z}}} \,d\mu
\end{equation*}
for any function $u \in C_b(\M)$.
\end{remark}

For a differential $k$-form $\alpha$ of class $C_b^1$ on $\Su$, we define $\widehat{\alpha} \colonequals q^*\alpha$. For each $\varepsilon > 0$, the form $\widehat{\alpha}$ is defined on $\Phi_{B_{\gamma(\varepsilon)}}\Su_{-\varepsilon}$.

\begin{corollary}
In the assumptions of Theorem \ref{theorem2}, let $\bs{\vec{Z}} = \bs{Z_1} \wedge \dots \wedge \bs{Z_{k + 1}}$ be a decomposable multivector field of class $C_b^1$ on $\Su$. Given $\varepsilon > 0$, let $\widetilde{\bs{\vec{Z}}} = \widetilde{\bs{Z_1}} \wedge \dots \wedge \widetilde{\bs{Z_{k + 1}}}$ be the $q$-connected to $\bs{\vec{Z}}$ multivector field on $\Phi_{B_\gamma} \Su_{-\varepsilon}$, and suppose that for each $i \in \{1, \dots, k + 1\}$, there exists $\dive{\widetilde{\bs{Z_i}}} \in L_\infty(\mu)$. Then $\bs{\vec{Z}} \in D(\dive_\Su)$ and $\dive_\Su \bs{Z_i} \in L_\infty(\sigma)$ for each $i \in \{1, \dots, k + 1\}$. Moreover, for any $\varepsilon > 0$ and a differential $k$-form $\alpha$ of class $C_0^1(\Su)$, the following equality holds
\begin{equation*}
\int\limits_{\Su_{-\varepsilon}} \langle \alpha, \dive_\Su \bs{\vec{Z}} \rangle \,d\sigma = \lim\limits_{r \to 0} \frac{1}{\lambda_m(B_r)} \int\limits_{\Phi_{B_r}\Su_{-\varepsilon}} \langle \widehat{\alpha}, \dive{\widetilde{\bs{\vec{Z}}}} \rangle \,d\mu.
\end{equation*}
\end{corollary}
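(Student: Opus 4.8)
The plan is to reduce everything to Theorem \ref{theorem2} by an induction that peels off one factor at a time via the recurrence (\ref{inductive_formula2}). The two membership claims are immediate: applying Theorem \ref{theorem2} to each $\bs{Z_i}$ separately (its hypothesis $\dive \widetilde{\bs{Z_i}} \in L_\infty(\mu)$ is assumed) gives $\bs{Z_i} \in D(\dive_\Su)$ with $\dive_\Su \bs{Z_i} \in L_\infty(\Su, \sigma)$. Since each factor then lies in $C_b^1(\Su) \cap D(\dive_\Su)$, the corollary to the Leibniz rule (\ref{inductive_formula2}) yields $\bs{\vec{Z}} \in C_b^1(\Su) \cap D(\dive_\Su)$; the same corollary on $\mathcal{V}$ shows $\widetilde{\bs{\vec{Z}}}$ and each partial product $\widetilde{\bs{Z_2}} \wedge \dots \wedge \widetilde{\bs{Z_{k+1}}}$ lie in $D(\dive)$. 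Consequently the recurrence (\ref{inductive_formula2}) may be legitimately applied to both $\dive_\Su \bs{\vec{Z}}$ and $\dive \widetilde{\bs{\vec{Z}}}$.

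For the limit formula I would induct on $k$, the base case $k = 0$ being exactly (\ref{theorem2_eq}). For the step, write $\bs{\vec{Z}} = \bs{X} \wedge \bs{\vec{Z'}}$ with $\bs{X} = \bs{Z_1}$ and $\bs{\vec{Z'}} = \bs{Z_2} \wedge \dots \wedge \bs{Z_{k+1}}$ (so $\widetilde{\bs{\vec{Z}}} = \widetilde{\bs{X}} \wedge \widetilde{\bs{\vec{Z'}}}$), and expand both divergences by (\ref{inductive_formula2}). Pairing $\dive_\Su \bs{\vec{Z}}$ with $\alpha$ and $\dive \widetilde{\bs{\vec{Z}}}$ with $\widehat{\alpha} = q^*\alpha$, and using the identity $\langle i_{\bs{X}} \omega, \bs{\vec{W}} \rangle = \langle \omega, \bs{X} \wedge \bs{\vec{W}} \rangle$, each side splits into three pieces: a $\dive \bs{X}$-piece $\dive_\Su \bs{X} \cdot \langle \alpha, \bs{\vec{Z'}} \rangle$, a $\dive \bs{\vec{Z'}}$-piece $-\langle i_{\bs{X}} \alpha, \dive_\Su \bs{\vec{Z'}} \rangle$, and a Lie piece $\langle \alpha, \Lie_{\bs{X}} \bs{\vec{Z'}} \rangle$ (with their tilde/hat counterparts on $\mathcal{V}$). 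The goal is to show that, after dividing by $\lambda_m(B_r)$ and letting $r \to 0$, each integrated tilde-piece on $\mathcal{V}$ converges to the corresponding $\sigma$-integral on $\Su_{-\varepsilon}$; since I will exhibit each limit separately, the limit of the sum splits accordingly.

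The three pieces are handled by three different tools, with the $q$-relatedness compatibilities doing the bookkeeping: because $\widetilde{\bs{X}}$ is $q$-connected with $\bs{X}$ and $\widehat{\alpha} = q^*\alpha$, one has $\langle \widehat{\alpha}, \widetilde{\bs{\vec{W}}} \rangle = \widehat{\langle \alpha, \bs{\vec{W}} \rangle}$, $i_{\widetilde{\bs{X}}} \widehat{\alpha} = \widehat{i_{\bs{X}} \alpha}$, and $\Lie_{\widetilde{\bs{X}}} \widetilde{\bs{\vec{Z'}}} = \widetilde{\Lie_{\bs{X}} \bs{\vec{Z'}}}$. For the first piece I rewrite the integrand as $\widehat{u}\, \dive \widetilde{\bs{X}}$ with $u = \langle \alpha, \bs{\vec{Z'}} \rangle \in C_b$ and invoke (\ref{theorem2_eq}). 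For the second, the integrand is $\langle \widehat{i_{\bs{X}} \alpha}, \dive \widetilde{\bs{\vec{Z'}}} \rangle$ with $i_{\bs{X}} \alpha \in C_0^1(\Su)$, so the inductive hypothesis applied to the $k$-vector field $\bs{\vec{Z'}}$ and the $(k-1)$-form $i_{\bs{X}} \alpha$ delivers the claim. For the third, the integrand carries no divergence factor and equals $\widehat{u_3}$ with $u_3 = \langle \alpha, \Lie_{\bs{X}} \bs{\vec{Z'}} \rangle \in C_b$, so Lemma \ref{lemma3} applies directly.

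I expect the main obstacle to be the Lie piece. One must verify the compatibility $\Lie_{\widetilde{\bs{X}}} \widetilde{\bs{\vec{Z'}}} = \widetilde{\Lie_{\bs{X}} \bs{\vec{Z'}}}$ (already recorded for the corollary to Proposition \ref{restriction_first_result}) and, more delicately, check that $\widehat{u_3}$, which a priori is only defined on $\Phi_{B_\gamma} \Su_{-\varepsilon}$, genuinely fits the hypothesis of Lemma \ref{lemma3}; this is ensured because $\alpha \in C_0^1(\Su)$ forces $u_3$, and hence $\widehat{u_3}$, to have bounded support vanishing near the boundary of the flow-box, so $\widehat{u_3}$ extends by zero to a function of class $C_b(\M)$ whose restriction to $\Su$ is $u_3$. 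The only remaining care is that (\ref{inductive_formula2}) demands each factor lie in $C_b^1 \cap D(\dive)$, which was arranged in the first paragraph.
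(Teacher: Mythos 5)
Your proposal is correct and follows essentially the same route as the paper: induction on $k$ with Theorem \ref{theorem2} as the base case, peeling off one factor via (\ref{inductive_formula2}) and treating the three resulting terms by (\ref{theorem2_eq}), the induction hypothesis applied to $i_{\bs{X}}\alpha$, and Lemma \ref{lemma3}, respectively, using the same $q$-relatedness identities. Your extra care with the membership claims and with extending $\widehat{u_3}$ to fit Lemma \ref{lemma3} only makes explicit what the paper leaves implicit.
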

\begin{proof}
Induction on $k$. Theorem \ref{theorem2} constitutes the basis of the induction. The induction step is based on formula (\ref{inductive_formula2}).

Let $\bs{\vec{Z}} = \bs{X} \wedge \bs{\vec{Y}}$, where $\bs{\vec{Y}}$ is a $k$-vector field. Then $\widetilde{\bs{\vec{Z}}} = \widetilde{\bs{X}} \wedge \widetilde{\bs{\vec{Y}}}$ and $\langle \widehat{\alpha}, \dive{\widetilde{\bs{\vec{Z}}}} \rangle = \dive{\widetilde{\bs{X}}} \cdot \langle \widehat{\alpha}, \widetilde{\bs{\vec{Y}}} \rangle - \langle i_{\widetilde{\bs{X}}} \widehat{\alpha}, \dive{\widetilde{\bs{\vec{Y}}}} \rangle + \langle \widehat{\alpha}, \Lie_{\widetilde{\bs{X}}} \widetilde{\bs{\vec{Y}}} \rangle$.

Since $\langle \widehat{\alpha}, \widetilde{\bs{\vec{Y}}} \rangle = \widehat{\langle \alpha, \bs{\vec{Y}} \rangle}$, Theorem \ref{theorem2} implies that
\begin{equation*}
\int\limits_{\Su_{-\varepsilon}} \dive_\Su \bs{X} \cdot \langle \alpha, \bs{\vec{Y}} \rangle \,d\sigma = \lim\limits_{r \to 0} \frac{1}{\lambda_m(B_r)} \int\limits_{\Phi_{B_r}\Su_{-\varepsilon}} \dive{\widetilde{\bs{X}}} \cdot \langle \widehat{\alpha}, \widetilde{\bs{\vec{Y}}} \rangle \,d\mu.
\end{equation*}
Since one has $i_{\widetilde{\bs{X}}} \widehat{\alpha} = \widehat{i_{\bs{X}} \alpha}$, the equality
\begin{equation*}
\int\limits_{\Su_{-\varepsilon}} \langle i_{\bs{X}} \alpha, \dive_\Su \bs{\vec{Y}} \rangle \,d\sigma = \lim\limits_{r \to 0} \frac{1}{\lambda_m(B_r)} \int\limits_{\Phi_{B_r}\Su_{-\varepsilon}} \langle i_{\widetilde{\bs{X}}} \widehat{\alpha}, \dive{\widetilde{\bs{\vec{Y}}}} \rangle \,d\mu
\end{equation*}
follows from the induction hypothesis.

We have $\langle \widehat{\alpha}, \Lie_{\widetilde{\bs{X}}} \widetilde{\bs{\vec{Y}}} \rangle = \widehat{u}$, where $u = \langle \alpha, \Lie_{\bs{X}} \bs{\vec{Y}} \rangle$ is a function of class $C_b(\Su_{-\varepsilon})$, and therefore the equality
\begin{equation*}
\int\limits_{\Su_{-\varepsilon}} \langle \alpha, \Lie_{\bs{X}} \bs{\vec{Y}} \rangle \,d\sigma = \lim\limits_{r \to 0} \frac{1}{\lambda_m(B_r)} \int\limits_{\Phi_{B_r} \Su_{-\varepsilon}} \langle \widehat{\alpha}, \Lie_{\widetilde{\bs{X}}} \widetilde{\bs{\vec{Y}}} \rangle \,d\mu
\end{equation*}
is a direct consequence of Lemma \ref{lemma3}. 

Applying now formula (\ref{inductive_formula2}) to $\dive_\Su(\bs{X} \wedge \bs{\vec{Y}})$ we obtain the statement of the corollary. 
\end{proof}


\end{document}